\setlist[enumerate,1]{label=(\roman*)}
\numberwithin{equation}{section}
\declaretheoremstyle[
  shaded={bgcolor=\thmcolor}
]{plain}
\declaretheoremstyle[
  headfont=\normalfont\bfseries,
  bodyfont=\normalfont,
  shaded={bgcolor=\defcolor}
]{noital}
\declaretheoremstyle[
  headfont=\normalfont\bfseries,
  bodyfont=\normalfont,
]{noital}
\declaretheorem[style=plain,numberwithin=section,name=Theorem]{theorem}
\declaretheorem[style=plain,sibling=theorem,name=Lemma]{lemma}
\declaretheorem[style=plain,sibling=theorem,name=Conjecture]{conjecture}
\declaretheorem[style=plain,sibling=theorem,name=Question]{question}
\declaretheorem[style=plain,sibling=theorem,name=Observation]{observation}
\declaretheorem[style=plain,numbered=no,name=Theorem]{theorem-n}
\declaretheorem[style=plain,numbered=no,name=Proposition]{proposition-n}
\declaretheorem[style=plain,numbered=no,name=Lemma]{lemma-n}
\declaretheorem[style=plain,numbered=no,name=Corollary]{corollary-n}
\declaretheorem[style=plain,numbered=no,name=Conjecture]{conjecture-n}
\declaretheorem[style=plain,numbered=no,name=Claim]{claim-n}
\declaretheorem[style=plain,numbered=no,name=Fact]{fact-n}
\declaretheorem[style=plain,numbered=no,name=Open Problem]{openproblem-n}
\declaretheorem[style=plain,numbered=no,name=Question]{question-n}
\declaretheorem[style=noital,sibling=theorem,name=Definition]{definition}
\declaretheorem[style=noital,numbered=no,name=Remark]{remark-n}
\declaretheorem[style=noital,numbered=no,name=Definition]{definition-n}
\declaretheorem[style=noital,numbered=no,name=Construction]{construction-n}
\declaretheorem[style=noital,numbered=no,name=Example]{example-n}
\newcommand{\defined}{\mathrel{\coloneqq}}
\newcommand{\st}{\mathbin{\colon}}
\DeclarePairedDelimiter{\set}{\lbrace}{\rbrace}
\newcommand{\emptyset}{\varnothing}
\newcommand{\union}{\mathbin{\cup}}
\newcommand{\inter}{\mathbin{\cap}}
\newcommand{\from}{\colon}
\DeclarePairedDelimiter{\floor}{\lfloor}{\rfloor}
\DeclarePairedDelimiter{\ceil}{\lceil}{\rceil}
\DeclarePairedDelimiterX{\abs}[1]
  {\lvert}{\rvert}{\ifblank{#1}{\,\cdot\,}{#1}}
\DeclarePairedDelimiterX{\norm}[1]
  {\lVert}{\rVert}{\ifblank{#1}{\,\cdot\,}{#1}}
\DeclarePairedDelimiterX{\inner}[2]
  {\langle}{\rangle}{\ifblank{#1}{\,\cdot\,}{#1},\ifblank{#2}{\,\cdot\,}{#2}}
\DeclareMathDelimiter{\given}
  {\mathbin}{symbols}{"6A}{largesymbols}{"0C}
\DeclareMathOperator{\Prob}{\mathbb{P}}
\DeclarePairedDelimiterXPP{\prob}[1]
  {\Prob}{\lparen}{\rparen}{}
  {\renewcommand{\given}{\nonscript\;\delimsize\vert\nonscript\;\mathopen{}}#1}
\DeclareMathOperator{\Expec}{\mathbb{E}}
\DeclarePairedDelimiterXPP{\expec}[1]
  {\Expec}{\lparen}{\rparen}{}
  {\renewcommand{\given}{\nonscript\;\delimsize\vert\nonscript\;\mathopen{}}#1}
\DeclareMathOperator{\Var}{Var}
\DeclarePairedDelimiterXPP{\var}[1]
  {\Var}{\lparen}{\rparen}{}
  {\renewcommand{\given}{\nonscript\;\delimsize\vert\nonscript\;\mathopen{}}#1}
\DeclareMathOperator{\Cov}{Cov}
\DeclarePairedDelimiterXPP{\cov}[2]
  {\Cov}{\lparen}{\rparen}{}{#1,#2}
\newcommand{\eps}{\varepsilon}
\newcommand{\sseq}{\subseteq}
\newcommand{\NN}{\mathbb{N}}
\newcommand{\cC}{\mathcal{C}}
\newcommand{\cD}{\mathcal{D}}
\DeclareMathOperator{\col}{col}
\newcommand{\chrom}{\chi_\text{g}}
\newcommand{\blanks}{\chi_\text{gb}}
\newcommand{\ts}{\textsuperscript}
\begin{document}
\title[On graphs with maximum difference between $\chrom(G)$ and $\chi(G)$]{On graphs with maximum difference between game chromatic number and chromatic number}

\author{Lawrence Hollom}
\address{Department of Pure Mathematics and Mathematical Statistics (DPMMS), University of Cambridge, Wilberforce Road, Cambridge, CB3 0WA, United Kingdom}
\email{lh569@cam.ac.uk}

%\keywords{Combinatorics, Mathematics.}

%\thanks{Put current funding information here.}

\begin{abstract}
    In the vertex colouring game on a graph $G$, Maker and Breaker alternately colour vertices of $G$ from a palette of $k$ colours, with no two adjacent vertices allowed the same colour.
    Maker seeks to colour the whole graph while Breaker seeks to make some vertex impossible to colour.
    The game chromatic number of $G$, $\chrom(G)$, is the minimum number $k$ of colours for which Maker has a winning strategy for the vertex colouring game.
    
    Matsumoto proved in 2019 that $\chrom(G)-\chi(G)\leq\floor{n/2} - 1$, and conjectured that the only equality cases are some graphs of small order and the Tur\'{a}n graph $T(2r,r)$.
    We resolve this conjecture in the affirmative by considering a modification of the vertex colouring game wherein Breaker may remove a vertex instead of colouring it.

    Matsumoto further asked whether a similar result could be proved for the vertex marking game, and we provide an example to show that no such nontrivial result can exist.
\end{abstract}

\maketitle

%----------------------------------------------------------%

\section{Introduction}
\label{sec:intro}

The vertex colouring game on a graph is a widely studied combinatorial game, first invented by Brams and then discussed by Gardner in his ``Mathematical Games'' column of Scientific American \cite{gardner1981scientific} in 1981. 
The game was subsequently reinvented by Bodlaeder \cite{bodlaender1991complexity} in 1991, who brought it into the combinatorics literature, in which it has since attracted significant attention.

The game is played on a graph $G$ by two players, Maker and Breaker, often also referred to as Alice and Bob.
The players alternately assign colours from a set $X$ of size $k$ to vertices of the graph, Maker going first, with the restriction that no two adjacent vertices may be given the same colour.
Maker wins the game if the whole of $G$ can be coloured, and Breaker wins otherwise, i.e. if some vertex becomes impossible to colour.
The \emph{game chromatic number} $\chrom(G)$ of $G$ is then the minimal number $k$ of colours for which Maker has a winning strategy.

Calculating -- or even estimating -- $\chrom(G)$ is very difficult, even for small graphs.
Bodlaeder first asked about the computational complexity of the vertex colouring game in 1991, and it took until 2020 before Costa, Pessoa, Sampaio, and Soares \cite{costa2020pspace} proved that the both the vertex colouring game and the related greedy colouring game are PSPACE-complete.
This result has since been extended to variants of the game where Breaker starts, or one or other of the players can pass their turn \cite{marcilon2020hardness}.

Despite the difficulty of the problem in general, the game chromatic number of various classes of graphs has received significant attention; the cases of forests \cite{faigle1991game} and planar graphs \cite{kierstead1994planar, nakprasit2018game, sekiguchi2014game, zhu1999game, zhu2008refined} have been studied, as have cactuses \cite{sidorowicz2007game}, and the relation to the acyclic chromatic number \cite{dinski1999bound} and game Grundy number \cite{havet2013game}, to name a few avenues of research.
Another significant direction of study, initiated by Bohman, Frieze, and Sudakov \cite{bohman2008game}, is the study of the game chromatic number of the binomial random graph.

One approach that can be taken in bounding $\chrom(G)$ is to compare it to the usual chromatic number $\chi(G)$.
Such an approach was taken by Matsumoto \cite{matsumoto2019difference}, who in 2019 proved the following theorem.

\begin{theorem}[{\cite[Theorem 1]{matsumoto2019difference}}]
\label{thm:comparison-basic}
    For any graph $G$ with $n\geq 2$ vertices, we have 
    \begin{align*}
        \chrom(G)-\chi(G)\leq\floor{n/2}-1.
    \end{align*}
\end{theorem}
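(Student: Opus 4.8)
\emph{Proof strategy.}
The plan is to dispose of an easy range of $\chi(G)$ by a trivial bound, and then exhibit a purely defensive strategy for Maker on the remaining range, whose success is verified by a counting argument. For the reduction: with $n$ colours Maker cannot lose, since every vertex has at most $n-1$ neighbours and so never sees all $n$ colours; thus $\chrom(G)\le n$ for every $G$. Since $\chi(G)+\floor{n/2}-1\ge(\ceil{n/2}+1)+\floor{n/2}-1=n$ whenever $\chi(G)\ge\ceil{n/2}+1$, the claimed bound is immediate in that range, so I assume from now on that $\chi\defined\chi(G)\le\ceil{n/2}$, set $k\defined\chi+\floor{n/2}-1$, and build a winning strategy for Maker with the colour set $\set{1,\dots,k}$. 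Fix once and for all an optimal proper colouring $c\from V(G)\to\set{1,\dots,\chi}$, and call the $\floor{n/2}-1$ colours $\chi+1,\dots,k$ \emph{spare}.

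Maker's strategy has a single priority rule and a default. At the start of her turn, if some uncoloured vertex already has at least $k-1$ colours present on its neighbourhood — so that one more Breaker move could make it uncolourable — Maker colours the most endangered such vertex, with any available colour. Otherwise she makes a \emph{safe} move: she colours some uncoloured vertex $y$ with the colour $c(y)$, provided $c(y)$ is still available there, and only if no such move exists does she colour an arbitrary uncoloured vertex arbitrarily. The point of safe moves is that, outside of emergencies, Maker never introduces a spare colour, and her moves are consistent with the template $c$, so she does not create a colour on $N(v)$ that is ``new'' relative to $c$.

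The analysis proceeds by contradiction. Suppose Breaker wins, and let $v^{*}$ be the first vertex that becomes uncolourable; since Maker's rule never voluntarily creates an uncolourable vertex, this happens immediately after a Breaker move, at which point $N(v^{*})$ carries all $k$ colours while $v^{*}$ is uncoloured. At most $\chi$ of these are non-spare, so in fact all $\floor{n/2}-1$ spare colours occur in $N(v^{*})$, on $\floor{n/2}-1$ distinct neighbours of $v^{*}$. A spare colour enters the board only when placed by Breaker or by Maker in an emergency, and each such emergency is chargeable to an earlier Breaker move that first brought some vertex to the brink; from this one concludes that essentially all of Breaker's at most $\floor{n/2}$ moves were spent colouring $N(v^{*})$. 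Feeding this back into the timeline then yields the contradiction: for $v^{*}$ to have been killed, Maker must at some earlier turn have failed to save a vertex sitting at $k-1$ colours, which under her rule can happen only if a \emph{second} vertex was endangered at the same instant; tracing such simultaneous threats backwards, and using that Breaker had essentially no moves to spare away from $N(v^{*})$ together with $\abs{V(G)}=n$ and $\chi\le\ceil{n/2}$, one finds no consistent game history.

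I expect this last point — ruling out \emph{double threats}, where Breaker colours a common neighbour of two almost‑saturated vertices with a colour missing from both, producing two threats Maker cannot both answer — to be the main obstacle. This is precisely where the hypothesis $\chi\le\ceil{n/2}$ must be used in full strength: one needs to bound, via the move budget and the fact that distinct spare colours occupy distinct vertices, how many vertices can ever be driven to the brink, and then to argue that two such vertices sharing an uncoloured neighbour would force $G$ to contain more vertices than it has — unless $G$ sits exactly at the extremal configuration, which is exactly why equality should force $G$ to be the Tur\'an graph $T(2r,r)$ (or one of finitely many small graphs). Making this trade‑off tight, rather than designing the strategy, will be the real work.
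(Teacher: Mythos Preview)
The paper does not prove this theorem; it is quoted from Matsumoto (2019) and used only as background. There is therefore no proof in the paper to compare against.

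Evaluating your proposal on its own: you correctly dispose of the range $\chi(G)\ge\lceil n/2\rceil+1$, but what follows is, as you say yourself, a plan rather than a proof, and the plan has real holes beyond the one you flag.

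First, the charging step is not justified. You want every spare colour appearing in $N(v^*)$ to be blamed on a distinct Breaker move, with Maker-placed spares (from emergencies) charged back to earlier Breaker moves. But nothing prevents a single Breaker move from both depositing a spare colour in $N(v^*)$ \emph{and} triggering an emergency elsewhere, nor do you argue that distinct emergencies trace to distinct Breaker moves. Without an injection you cannot conclude that Breaker spent essentially all of his $\lfloor n/2\rfloor$ moves inside $N(v^*)$, and the contradiction does not close. Separately, your fallback clause (``if no safe move exists, colour arbitrarily'') is a third source of spare colours that your accounting ignores entirely.

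Second, and more fundamentally, ``colour the most endangered vertex'' is precisely the kind of rule that double threats defeat, and you provide no mechanism to exclude them --- only a hope that the vertex budget together with $\chi\le\lceil n/2\rceil$ will somehow force a contradiction. That is not an argument; it is where the argument would have to begin. As written, then, this is a heuristic rather than a proof. If you want a route that actually closes, a strategy that ties Maker's moves to the colour classes of a fixed $\chi$-colouring and forces colour \emph{repetition} (so that the number of distinct colours ever used stays below $k$, making it impossible for any neighbourhood to see all $k$) gives far cleaner bookkeeping than danger-based rescue; that is also closer in spirit to how the present paper attacks its sharper inequality.
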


Following on from this, Matsumoto conjectured a classification of the equality cases, as follows.

\begin{conjecture}[{\cite[Conjecture 2]{matsumoto2019difference}}]
\label{conj:matsumoto}
    For any graph $G$ with $n\geq 4$ vertices,
    \begin{align*}
        \chrom(G)-\chi(G)\leq\floor{n/2}-2,
    \end{align*}
    unless $G$ is isomorphic to either the Tur\'{a}n graph $T(2r,r)$ or $K_{2,3}$.
\end{conjecture}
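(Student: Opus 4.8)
The plan is to resolve \Cref{conj:matsumoto} by studying a modified game, which I will call the \emph{vertex colouring game with removal}: the rules are as in the ordinary vertex colouring game, except that on Breaker's turn Breaker may either colour an uncoloured vertex (respecting the proper-colouring constraint) or delete an uncoloured vertex from the graph entirely. Write $\connd(G)$ for the least $k$ such that Maker wins this game on $G$ with $k$ colours. Deleting a vertex is at least as good for Breaker as leaving it uncoloured forever and never interfering with it, so intuitively Breaker is only stronger in the removal game; in particular one expects $\chrom(G) \le \connd(G)$, and more importantly the removal game is much more amenable to induction, since after Breaker's deletion move the remaining board is again an instance of the same game on a smaller graph. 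The first step is therefore to set up this game carefully and prove the monotonicity-type inequalities relating $\connd$, $\chrom$, and $\chi$, together with the basic fact that $\connd$ behaves well under taking the graph one vertex at a time (a one-move lookahead recursion).

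The second step is to prove the analogue of \Cref{thm:comparison-basic} for the removal game with the equality cases built in from the start, i.e.\ a statement of the shape $\connd(G) - \chi(G) \le \floor{n/2} - 1$, with a clean description of when equality holds. The advantage of doing this for $\connd$ rather than $\chrom$ is that the deletion move lets Breaker enforce a clean inductive structure: if Maker colours a vertex $v$, Breaker can respond either by colouring a neighbour of $v$ (reusing colours, keeping $\chi$ from dropping) or by deleting a vertex to pass to a smaller hard instance. One sets up a potential-function / weight argument tracking $2(\connd(G) - \chi(G)) - n$ and shows it is non-increasing under an appropriate Breaker strategy, with equality propagating only through a very rigid local structure — each Maker move must be "answered" by Breaker colouring a vertex with a brand-new colour, which forces the graph to look like a blow-up of an edge, i.e.\ complete multipartite with all parts of size $2$, that is, $T(2r,r)$.

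The third step is to transfer the classification back to the original game. Here one needs the reverse inequality $\connd(G) \le \chrom(G)$ on the relevant extremal family, or more precisely: if $G$ achieves $\chrom(G) - \chi(G) = \floor{n/2} - 1$, then it also achieves the extremal bound in the removal game, so the rigidity analysis from step two applies and pins down $G$ up to the small exceptional cases. The small cases ($n$ small, the graph $K_{2,3}$) are then checked by hand or by a short case analysis, since the asymptotic rigidity argument only has teeth once $n$ is large enough. Finally, for the vertex marking game one exhibits an explicit family of graphs (Matsumoto's question) showing the marking game analogue fails, which is a separate, self-contained construction.

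The main obstacle I expect is step two: proving that equality in the removal-game bound forces the extremely rigid structure of $T(2r,r)$. The inequality itself should follow from a reasonably natural Breaker strategy, but extracting the classification requires showing that \emph{every} deviation of $G$ from being a balanced complete multipartite graph gives Breaker enough slack to save at least one colour or one pair of vertices — and one must be careful that this is true uniformly, not just in the "generic" step of the induction, so that the potential strictly decreases unless $G$ is exactly extremal at every scale. Handling the interaction between Maker's freedom to choose where to play and Breaker's two options (colour vs.\ delete) in the tightness analysis, without the case tree exploding, is where the real work lies; the transfer back to $\chrom$ and the marking-game counterexample are comparatively routine by contrast.
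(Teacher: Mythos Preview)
Your high-level plan --- introduce a variant where Breaker may delete a vertex, prove the sharpened bound for that variant, then transfer back via the trivial inequality $\chrom(G)\le\connd(G)$ --- is exactly the paper's architecture. Step three is essentially free once step two is done (no ``reverse inequality'' is needed: if $\chrom(G)-\chi(G)=\floor{n/2}-1$ then $\connd(G)-\chi(G)\ge\floor{n/2}-1$, forcing equality in the removal-game bound, so the classification there applies directly). So the whole question is whether your step two goes through.

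There are two genuine gaps in your step two. First, the direction of the argument is inverted. We are proving an \emph{upper} bound on $\connd(G)$, so we must exhibit a \emph{Maker} strategy that wins with $\chi(G)+\floor{n/2}-2$ colours; your description (``Breaker can respond either by colouring a neighbour of $v$ \ldots\ or by deleting a vertex to pass to a smaller hard instance'', ``non-increasing under an appropriate Breaker strategy'') is a lower-bound argument. The rigidity analysis must show that \emph{whatever Breaker does}, Maker can save a colour unless $G$ is $T(2r,r)$ --- not that Breaker can enforce a particular reduction.

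Second, and more seriously, the naive induction does not close. After one round (Maker at $u$, Breaker at $v$), pass to $G'=G-\{u,v\}$: you have used at most two colours, $n$ drops by $2$, but $\chi$ may not drop at all, so the inductive bound on $G'$ only yields $\connd(G)\le\chi(G)+\floor{n/2}-1$. The paper's fix is to further generalise the game: certain independent sets are \emph{marked for blanks}, meaning Maker too may play a blank there (standing in for a colour that has so far appeared only in that class). This extra bookkeeping is what lets a colour confined to one class be removed from the palette when passing to $G'$ without losing the option of reusing it, and it is exactly what makes the potential $p+\floor{n/2}$ drop by the right amount. Your proposal has no analogue of this device, and the ``potential-function / weight argument'' you sketch does not, as stated, control the gap between colours spent and $\chi$-drop. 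You have correctly identified step two as the crux, but the missing idea is this strengthened inductive hypothesis, not a more refined Breaker strategy.
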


Throughout this paper, all graphs we consider are finite and simple, but not necessarily connected.
We resolve \Cref{conj:matsumoto} (modulo some trivialities regarding graphs of small order) by first introducing a new, modified version of the vertex colouring game, wherein Breaker may choose to play a `blank', which is equivalent to deleting a vertex.
Neighbours of a vertex at which a blank has been played may still be given any colour, or even another blank.
This game, and the corresponding graph invariant $\blanks(G)$ (the minimal number of colours, not counting blanks, for which Maker wins) are significantly more amenable to inductive methods than the standard vertex colouring game.
It is this property which allows us to prove the following theorem.

\begin{theorem}
\label{thm:intro-comparison}
    For any graph $G$ with $n\geq 6$ vertices which is not isomorphic to the $r$-partite Tur\'{a}n graph $T(2r,r)$, we have 
    \begin{align}
        \blanks(G)-\chi(G)\leq\floor{n/2}-2.
    \end{align}
\end{theorem}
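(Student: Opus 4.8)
The natural line of attack is induction on $n$, but it cannot be run on the bare colouring game: after Maker colours a vertex $v$ and Breaker then plays (colouring or blanking) a vertex $w$, the residual position is a blank game on $G-v-w$ in which the colour Maker used is forbidden on $N(v)$ and, if Breaker coloured, his colour is forbidden on $N(w)$, so that list assignments appear even if one begins from a uniform palette. I would therefore prove a list-colouring strengthening: for a graph $H$ on $m$ vertices carrying a list assignment $L$, with a nominated player to move, a hypothesis $(\star)$ on the pair $(H,L)$ — comparing the sizes $\abs{L(u)}$ to the degrees $\deg_H(u)$ and to $m$ — implies that Maker wins the corresponding blank list-game. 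The bridge to the theorem is then a single root lemma: when $L$ assigns every vertex a common palette of $\chi(G)+\floor{n/2}-2$ colours, the hypothesis $(\star)$ holds for $(G,L)$ exactly as long as $G$ is not a balanced Tur\'an graph $T(2r,r)$. A feature of the blank game that makes all of this possible, and that fails for $\chrom$, is a form of monotonicity under vertex deletion — Breaker may always simply blank a vertex — so that peeling vertices off the graph is a legitimate inductive move.

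For the inductive step one takes $H$ with $m\ge 8$ satisfying $(\star)$ and shows Maker wins. If it is Maker's turn she colours a carefully chosen vertex $v$ (roughly: one of minimum degree, lying in a smallest colour class of a fixed optimal colouring) with a carefully chosen colour, passing to the blank list-game on $H-v$ with Breaker to move; if it is Breaker's turn, his move passes to the blank list-game on $H-w$ with Maker to move, with at most one colour newly forbidden, namely on $N(w)$. Over one Maker-then-Breaker round two vertices disappear, so the quantity $\floor{\cdot/2}-2$ in the root lemma drops by exactly one, while $\chi$ of the residual graph does not increase; this leaves one colour of slack, which is exactly what is needed — after Maker's choices are made ``consistently'', so that the colour she introduces is not charged against the budget — to absorb the newly forbidden colour and re-establish $(\star)$ for the residual position, \emph{provided} that position is not again built on a balanced Tur\'an graph.

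That proviso is the crux, and the main obstacle: one needs a structural dichotomy to the effect that either $G$ admits a vertex whose deletion both keeps $\chi$ under control and moves the graph away from the family of balanced complete multipartite graphs, in which case the reduction above closes, or else $G$ is $T(2r,r)$ itself or lies on a short, explicit list of near-relatives — a balanced complete multipartite graph with one edge or one vertex added or removed, and similar perturbations — each of which must be handled by hand. Settling this dichotomy is also what pins the threshold at $n\ge 6$: for $n\le 5$ there are genuine sporadic exceptions, such as $K_{2,3}$, for which the improved bound simply fails. The remaining ingredients are routine: the base cases $n\in\{6,7\}$ — both parities are required, since a round of the induction removes two vertices — which can be settled by a finite check supported by \Cref{thm:comparison-basic}, together with the mechanical verification that $(\star)$ propagates exactly as described through each type of move.
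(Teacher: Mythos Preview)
Your outline correctly identifies that a bare induction on $n$ fails and that some strengthening is needed to make the inductive step close, but as written there is no proof, because the central ingredient --- the hypothesis $(\star)$ --- is never formulated. You describe it only as ``comparing the sizes $\abs{L(u)}$ to the degrees $\deg_H(u)$ and to $m$'' and then assert that it holds for the uniform palette exactly when $G\not\cong T(2r,r)$, that it propagates through each move, and that it implies Maker wins; none of these claims can be checked. More to the point, the target bound $\chi(G)+\lfloor n/2\rfloor -2$ depends on the global parameter $\chi(G)$, and I do not see how any purely local condition on the triples $(\abs{L(u)},\deg_H(u),m)$ can encode it --- any $(\star)$ that actually works will almost certainly have to carry a fixed proper colouring of $H$ as part of the data. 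There is also a concrete arithmetic worry in your sketch: over one Maker--Breaker round a vertex $x\in N(v)\cap N(w)$ loses two colours from $L(x)$ while $\lfloor m/2\rfloor -2$ drops by only one; you absorb this by declaring that Maker's colour is ``not charged against the budget'', but arranging that already forces Maker's vertex and colour choices to depend on the class structure, which a local list hypothesis does not see. And the ``structural dichotomy'' you flag as the crux is asserted, not proved.

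For comparison, the paper avoids lists entirely and strengthens the hypothesis in a different direction: it fixes a partition $C_1,\dots,C_p$ of $V(G)$ into independent sets and designates some of these classes as ``marked for blanks'', in which Maker too may play blanks --- a blank in $D_i$ standing for a colour that has so far been confined to $D_i$ and is therefore still playable there. The strengthened statement is $\blanks(G;D_1,\dots,D_s)\le p+\lfloor n/2\rfloor -2$ whenever $s\ge 1$ or ($n\ge 6$ and $G\not\cong T(2r,r)$); an imagination-strategy lemma transfers play from $G$ to $G'=G-U$ by trading each colour used only inside some $C_i$ for marking $C_i$ for blanks in $G'$, and a separate lemma disposes of the residual Tur\'an case directly. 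This class-tracking is exactly the global information your $(\star)$ would have to encode, and the paper's framework makes it explicit rather than hiding it inside an unspecified hypothesis.
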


Noting that $\blanks(G)\geq \chrom(G)$, \Cref{conj:matsumoto} follows from \Cref{thm:intro-comparison}.

Matsumoto also asked whether a similar inequality holds for the marking game, also called the graph colouring game (see \Cref{sec:marking-game} for a definition of this game).
We resolve this problem, presenting graphs with large game colouring number and small chromatic number in \Cref{sec:marking-game}.

In \Cref{sec:preliminaries} we give the definitions and preliminary results we shall need in our proof of \Cref{thm:intro-comparison}, before presenting the inductive step of the proof in \Cref{sec:main-proof}.
We conclude with some open problems and directions for future work in \Cref{sec:future}.

%----------------------------------------------------------%

\section{Definitions and preliminary results}
\label{sec:preliminaries}

The precise small graphs for which \Cref{conj:matsumoto} does not hold are the $r$-partite Tur\'{a}n graph $T(2r,r)$, any of the connected subgraphs of the complete bipartite graph $K_{2,3}$ which itself contains the path on four vertices $P_4$ as a subgraph, and any graph of order at most 3.
We restrict our attention to graphs of order at least 6 to avoid the technicalities that come with these small cases.
We in fact prove a statement slightly stronger than \Cref{thm:intro-comparison} concerning a generalisation of the vertex colouring game, which we now define.

\begin{definition}
\label{def:game-with-blanks}
    We define the \emph{vertex colouring game with blanks} to be the usual vertex colouring game, except now Breaker can use his turn to mark a vertex as `blank', rather than with a particular colour.
    When we refer to `colours' in this game, we do not include blanks.
    We will refer to a player marking a vertex as blank as \emph{playing a blank} at that vertex.
    Blanks do not restrict what colours may be played at adjacent vertices, and any such vertex may still be given any colour not prohibited by its own neighbours.
    Thus playing a blank is equivalent to deleting a vertex from the graph.
    The game ends when no vertex can legally be given any (non-blank) colour; in particular, if Breaker's only legal move is to play a blank, then the game ends and Breaker wins.
    The minimum number of colours for which Maker wins the vertex colouring game with blanks is denoted $\blanks(G)$.
\end{definition}

Throughout the rest of the paper we will refer to a player assigning a colour to a vertex as \emph{playing a vertex}.
We now generalise the game even further, as is necessary for our proof.

\begin{definition}
\label{def:marked-for-blanks}
    The vertex colouring game with blanks on $G$ with independent sets $D_1,\dotsc,D_s \sseq V(G)$ \emph{marked for blanks} is played as the vertex colouring game with blanks, but with the following differences.
    Now Maker -- as well as Breaker -- may play blanks in $D\defined D_1\union\dots\union D_s$, and the game does not end while vertices in $D$ are left unplayed, even if the only valid move is to play a blank.
    Note that neither player is forced to play in $D$. 
    Furthermore, Breaker may still play blanks at any vertex, whether it is in $D$ or not, and both Maker and Breaker can play both colours and blanks in $D$.
    Finally, whenever Breaker plays a blank at a vertex $v\in V(G)$ (and only if he plays a blank), he may pick some class marked for blanks $D_i\not\ni v$ (if such a class exists), and remove it from the list of classes marked for blanks.
    
    We define $\blanks(G;D_1,\dotsc,D_s)$ to be the minimum number of colours for Maker to win the vertex colouring game with blanks, with independent sets $D_1,\dotsc,D_s$ marked for blanks.
    
    In particular, if $s=0$ and no classes are marked for blanks then the above game agrees with \Cref{def:game-with-blanks}. 
    At the other extreme, if $D_1=V(G)$ (and so $G$ has no edges), $\blanks(G;D_1)=0$, as both players are forced to play blanks until there are no vertices left.
    
    Finally, we will on occasion want to condition on the first few plays of the game.
    If $P$ is an ordered list of (vertex, colour-or-blank) pairs which form a legal sequence of moves, then
    $\blanks(G;D_1,\dotsc,D_s|P)$ is given the same definition as $\blanks(G;D_1,\dotsc,D_s)$, but with the added condition that the game must start with the sequence $P$ of moves.\\
\end{definition}

Intuitively, the purpose of classes marked for blanks is to allow us to induct: if some colour $c$ has been played only in some independent set $D$, then it should remain possible to play $c$ in $D$.
This is represented by removing played vertices from $G$ and removing used colours from the palette, and marking $D$ for blanks.
After played vertices and colours (including $c$) have been removed, either Maker or Breaker playing a blank in $D$ can be thought of as playing colour $c$.
This intuition is formalised in \Cref{lem:subgraph-imagination}.

Note that classes marked for blanks only help Maker: Maker can choose to play as if there are no classes marked for blanks, and the game will proceed exactly as if there are no such classes.
The only exception to this is at the end of the game, when both players may be forced to play blanks, allowing Maker a chance to win when she would otherwise have lost.
Therefore $\blanks(G)\geq\blanks(G;D_1,\dotsc,D_s)$.

We now state our main theorem, which the remainder of this section and \Cref{sec:main-proof} are devoted to proving.

\begin{theorem}
\label{thm:comparison-better}
    Let $G$ be a graph on $n$ vertices with $n\geq 2$, and $C_1,\dotsc,C_p$ be a partition of $V(G)$ into $p\geq\chi(G)$ disjoint independent sets. 
    Let $0\leq s\leq p$, and let $D_1,\dotsc,D_s$ be some $s$ of the sets $C_1,\dotsc,C_p$.
    Assume that either $s\geq 1$, or that $n\geq 6$ and $G$ is not isomorphic to the Tur\'{a}n graph $T(2r,r)$.
    Then
    \begin{align}
    \label{eq:comparison-target}
        \blanks(G;D_1,\dotsc,D_s)\leq p+\floor{n/2}-2.
    \end{align}
\end{theorem}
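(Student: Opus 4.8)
The plan is to argue by strong induction on $n$, handling the small values directly from \Cref{lem:subgraph-imagination} and the other preliminary results of this section (in particular the configurations with $s\ge 1$ and $2\le n\le 5$ that appear in the recursion, as well as $n\le 5$ with $s=0$). Write $k\defined p+\floor{n/2}-2$ for the palette we allow Maker. The engine of the induction is a balance: since $\floor{n/2}-\floor{(n-2)/2}=1$, deleting two vertices lowers the target $k$ by exactly one, so Maker can afford to permanently ``retire'' one colour per round, after which \Cref{lem:subgraph-imagination} lets us pass to a strictly smaller instance with one fewer vertex-pair and one fewer colour.

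Maker's opening move is to colour a vertex $v_0$ of a largest class, say $v_0\in C_{i_0}$, with colour $k$ (when every class is a singleton we have $k\ge n$ and Maker wins trivially, so we may assume $\card{C_{i_0}}\ge 2$). No matter how play continues, colour $k$ will only ever appear inside $C_{i_0}$, so \Cref{lem:subgraph-imagination} always allows us to delete $v_0$, drop colour $k$ from the palette, and adjoin $C_{i_0}\setminus\set{v_0}$ to the list of classes marked for blanks. Maker's further strategy maintains the invariant that at least one class is marked for blanks: whenever a marked class is emptied she recreates one by colouring, with a fresh colour to be retired, a vertex of some class of size at least two, and when all surviving classes are singletons the remaining graph is handled trivially or by a base case. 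Because a marked class is always present, the Tur\'{a}n-graph exception in the hypothesis is never invoked in the recursion, and the only degeneracy left to control is the remaining graph becoming too small.

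After the opening, Maker plays reactively, pairing each Breaker move with a reply. If Breaker colours $w\in C_j$ with a colour $c$: since $C_j$ is independent, any other $w'\in C_j$ is nonadjacent to $w$, so (unless $c$ is blocked at $w'$ by an already-retired ``reserved'' colour) $c$ is legal at $w'$, and Maker replies by colouring $w'$ with $c$ too; then $c$ appears only in $C_j$, and \Cref{lem:subgraph-imagination} lets us delete $w,w'$, retire $c$, and mark $C_j\setminus\set{w,w'}$. If Breaker instead plays a blank at $w$ (possibly dropping some $D_\ell$ from the marked list), Maker replies with a blank in a currently marked class, which exists by the invariant, so again two vertices disappear under control, the loss of $D_\ell$ being harmless because a marked class survives. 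In every case the reduced instance has parameters $(n-2,p',s')$ with $p'\le p$ and $s'\ge 1$, so the induction hypothesis applies and gives
\begin{equation*}
\blanks(G';D'_1,\dotsc,D'_{s'})\le p'+\floor{(n-2)/2}-2\le p+\floor{n/2}-3=k-1;
\end{equation*}
together with the single retired colour, \Cref{lem:subgraph-imagination} then yields $\blanks(G;D_1,\dotsc,D_s)\le k$, which is \eqref{eq:comparison-target}.

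The delicate points, which are the content of \Cref{sec:main-proof}, are all about making this reactive pairing watertight against every Breaker move without ever forcing a second retirement in a round: (i) when $\card{C_j}=1$, or when Breaker deliberately plays a colour already retired and reserved for another class so that Maker cannot mirror it inside $C_j$, Maker must fall back on a blank in a marked class or on a fresh colour, and one must check the one-retirement-per-round budget still closes; (ii) Maker's opening class and her pairing choices must be made so that $p'\le p$ is preserved and the recursion stays away from the small exceptional configurations; and (iii) the base cases, especially the instances with $s\ge 1$ and $2\le n\le 5$. I expect (i) to be the main obstacle --- in particular, the regime where $G$ has only small classes, so that Breaker can threaten a reserved-colour conflict almost every move, is exactly the place where the hypothesis $G\not\cong T(2r,r)$ has to be used.
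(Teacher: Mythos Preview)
Your outline is close in spirit to the paper's argument: both induct on $n$, use \Cref{lem:subgraph-imagination} to strip off played vertices together with a small set $X$ of ``retired'' colours, and have Maker mirror Breaker inside his class so as to keep $\abs{X}$ under control while manufacturing a marked class to dodge the $T(2r,r)$ exception. But there are two concrete gaps.

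First, your application of \Cref{lem:subgraph-imagination} is not well-formed. The lemma requires deleting the \emph{entire} set $U$ of played vertices with $\abs{U}=2t$; you cannot ``delete $w,w'$, retire $c$'' while leaving the already-coloured $v_0$ in the graph. Any single invocation of the lemma must remove $v_0$ as well, and must be made after an even number of plays.

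Second, and this is the real obstruction, the ``one retired colour per round'' budget does not close in the generic case. After your opening and Breaker's first reply with a fresh colour $c\neq k$ in a class of size at least $2$, you have $t=1$, $\abs{X}=2$, and typically $p'=p$ (no class emptied). The induction then gives
\[
\blanks(G';\dotsc)\le p'+\floor{n'/2}-2=p+\floor{n/2}-3=k-1,
\]
so \Cref{lem:subgraph-imagination} yields only $\blanks(G;\dotsc\mid P)\le k+1$, off by one. Going to $t=2$ does not help: if Breaker again plays a fresh colour you get $\abs{X}=3$ against the same drop in the target. The paper's fix is precisely not to reduce after a fixed number of rounds, but to keep copying Breaker for a variable number of rounds until \emph{either} a class is completely emptied (gaining $p-p'\ge 1$) \emph{or} Breaker is forced to repeat a colour (gaining $\abs{X}\le t$); only then is \Cref{lem:subgraph-imagination} invoked, and the bookkeeping becomes the pair of inequalities \eqref{eq:induction-col}--\eqref{eq:induction-col-safe}. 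The five sub-cases of \Cref{subsec:maker-plays-colour} are exactly the exhaustive analysis of how this waiting can terminate.

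A smaller point: when $s\ge 1$ the paper has Maker open with a blank in some $D_i$ rather than with a colour. This is what makes the $n\in\{2,3,4,5\}$ instances with $s\ge 1$ work as clean base cases; with your opening (a colour in a largest class) those small cases become awkward, since your claim ``$k\ge n$ when every class is a singleton'' fails for $n\in\{2,3\}$.
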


Noting that we may take $s=0$ and $p=\chi(G)$ in the above, \Cref{thm:intro-comparison} follows.
We shall refer to the independent sets $C_1.\dotsc,C_p$ as the `classes' of $G$, noting that they are colour classes in some proper colouring of $G$ (not necessarily using the minimal number of colours).
Our proof of \Cref{thm:comparison-better} proceeds by induction on $n$, and we use the following as our base case.

\begin{lemma}
\label{lem:base-case}
    All of the following finite simple graphs satisfy inequality \eqref{eq:comparison-target} with $s=0$.
    \begin{enumerate}
        \item \label{case:matsumoto-6} Graphs of order 6 except for $T(6,3)$.
        \item \label{case:matsumoto-7} Graphs of order 7.
        \item \label{case:matsumoto-interesting} Proper subgraphs $G\subsetneqq T(2r,r)$ where $r\geq 3$, $\abs{V(G)}=2r$, and each $C_i$ has size 2.
        \item \label{case:matsumoto-bipartite} Any subgraph of $K_{2,r}$ on $2+r$ vertices, with $p=2$ and $\abs{C_1}=2$, for $r\geq 4$.
    \end{enumerate}
    Furthermore, the following satisfy \eqref{eq:comparison-target} with $s\geq 1$.
    \begin{enumerate}[resume]
        \item \label{case:matsumoto-2} Graphs of order 2.
        \item \label{case:matsumoto-3} Graphs of order 3.
    \end{enumerate}
    Finally, the following graphs satisfy $\blanks(G)\leq \chi(G) + \floor{n/2} - 1$.
    \begin{enumerate}[resume]
        \item \label{case:matsumoto-base} Graphs of order $n$ for $2\leq n \leq 5$.
    \end{enumerate}
\end{lemma}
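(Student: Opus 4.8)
The plan is to prove each item by exhibiting a winning strategy for Maker, the workhorse throughout being the greedy bound $\blanks(G)\le\Delta(G)+1$: with $\Delta(G)+1$ colours every unplayed vertex always has a free colour, so no vertex can ever become uncolourable; since a blank merely deletes a vertex, the game then necessarily runs until every vertex has been played, whereupon Maker wins. In each of the cases below Breaker's option to blank in fact never helps him — a blank only removes a vertex and lowers degrees — so one may largely reason as though Breaker always colours. It remains to treat, one family at a time, the graphs for which $\Delta(G)+1$ exceeds the required bound.

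\textbf{The two infinite families.} For \Cref{case:matsumoto-bipartite}, $G\sseq K_{2,r}$ with $r\ge 4$, classes $C_1$ (of size $2$) and $C_2$, and target $\floor{r/2}+1\ge 3$: Maker colours one vertex of $C_1$. Each vertex of $C_2$ has degree at most $2<3$, hence is never uncolourable; and after Breaker's reply the second vertex of $C_1$ still has at most one forbidden colour, so Maker colours it, after which no vertex can ever be uncolourable. For \Cref{case:matsumoto-interesting}, $G$ is $K_{2r}$ with the perfect matching $C_1,\dotsc,C_r$ and at least one further edge $uv$ (say $u\in C_i$, $v\in C_j$) deleted, and the target is $2r-2$. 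The deleted edge gives $\deg u,\deg v\le 2r-3$, so $u$ and $v$ are always colourable with $2r-2$ colours; any other vertex $w$ has its neighbourhood contained in $V(G)\setminus C_{k(w)}$, where $C_{k(w)}$ is the class of $w$, a set of size $2r-2$, so $w$ can be endangered only if it has full degree $2r-2$ and its whole neighbourhood is coloured rainbow. Maker colours $u$; her aim is to make some class $C_k$ monochromatic, since then every full-degree vertex outside $C_k$ has $C_k$ inside its neighbourhood and is permanently safe, leaving only the at most two vertices of $C_k$ to worry about — and she arranges things so these are low-degree or already coloured. Concretely she next tries to colour the other vertex $u'$ of $C_i$ with $u$'s colour, making $C_i$ monochromatic; in the generic case Breaker can obstruct this only by colouring $v$ with that colour (the only non-neighbour of $u$ besides $u'$), but then $\{u,v\}$ is already monochromatic, putting every vertex outside $C_i\cup C_j$ out of danger, and Maker finishes by colouring the handful of remaining vertices of $C_i\cup C_j$, responding to whichever Breaker threatens first; the case where Breaker instead plays inside $C_i$ himself is similar, with Maker pivoting to $C_j$.

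\textbf{The small fixed orders.} Items \Cref{case:matsumoto-2} and \Cref{case:matsumoto-3} (orders at most $3$, with classes marked for blanks) and item \Cref{case:matsumoto-base} (orders at most $5$) are finished by direct inspection: for each of the finitely many graphs, partitions into classes, and choices of marked classes, the game tree has bounded depth and one simply checks that Maker has an answer. Apart from a short list of sparse graphs that carry a vertex of degree close to $n-1$ — stars, and graphs obtained from a star by adding a bounded number of edges, which Maker handles by colouring that vertex first — the greedy bound $\blanks(G)\le\Delta(G)+1$ already suffices.

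\textbf{Orders $6$ and $7$: the main obstacle.} In items \Cref{case:matsumoto-6} and \Cref{case:matsumoto-7} one must show $\blanks(G)\le\chi(G)+1$ for every graph of order $6$ or $7$ other than $T(6,3)$. The greedy bound settles every $G$ with $\Delta(G)\le\chi(G)$, so the task reduces to classifying and dispatching the graphs with $\Delta(G)\ge\chi(G)+1$. I would organise this by $\chi(G)$: when $\chi(G)=2$, the graphs of large maximum degree are subgraphs of complete bipartite graphs and fall to (variants of) the argument for \Cref{case:matsumoto-bipartite} together with the star argument above; a graph with a universal vertex is handled by colouring that vertex first and analysing the smaller, colour-reduced game that remains, the one subtlety being the tempo Maker concedes there; and the bounded residual list — which is exactly where $T(6,3)$ surfaces as the unique exception, since $\blanks(T(6,3))=5>\chi(T(6,3))+1$ — is cleared by explicit strategies, most cleanly through a routine exhaustive search over the $156$ graphs of order $6$ and the $1044$ of order $7$. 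The real difficulty is keeping this final step genuinely finite and transparent rather than a sprawl of cases; that Breaker's blanks are useless on such small dense graphs is what keeps it tractable.
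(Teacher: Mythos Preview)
Your outline is broadly correct and parallel to the paper's approach, but the central tool differs in a way that substantially affects how much case-work survives. You rely on the trivial bound $\blanks(G)\le\Delta(G)+1$; the paper proves and repeatedly invokes the sharper statement that if at most $\lceil k/2\rceil$ vertices of $G$ have degree at least $k$, then $\blanks(G)\le k$ (Maker simply colours all the high-degree vertices on her first $\lceil k/2\rceil$ turns, and since at most $2(\lceil k/2\rceil-1)\le k-1$ vertices have been played before each of these turns, she can always do so). With $k=p+1$ this disposes of part~(iv) in one line, and for parts (i)--(ii) it clears most class-size sequences immediately. Your $\Delta+1$ bound leaves many more survivors: for instance $K_{3,3}$ has $\chi=2$ and $\Delta=3$, so your bound yields only $\blanks\le 4$ rather than the required $3$, and the argument for (iv) does not apply since neither side has size~$2$. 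Accordingly the paper organises parts (i)--(ii) by the class-size sequence $(\lvert C_1\rvert,\dotsc,\lvert C_p\rvert)$ rather than by $\chi$, which meshes well with the sharper greedy lemma and leaves only a short residual list --- $(3,3)$, $(4,3)$, $(3,2,2)$, $(3,2,1,1)$ and a few others --- each dispatched by an explicit two- or three-move strategy rather than an appeal to exhaustive search over all $156+1044$ graphs.

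Your argument for (iii) is along the right lines but has an imprecision: the claim that $v$ is ``the only non-neighbour of $u$ besides $u'$'' presumes that $uv$ is the unique edge missing at $u$, which need not hold in an arbitrary proper subgraph of $T(2r,r)$. If Breaker instead plays colour $c$ at some other non-neighbour $w\notin C_i$ of $u$, your safety argument still goes through with $w$ in place of $v$, so the gap is easily patched. The paper's version frames the goal more robustly --- force a single repeated colour whose two occurrences lie in classes that are completely coloured before the final move --- and then branches cleanly on Breaker's first reply; the content is essentially the same but the bookkeeping is tighter.
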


The proofs of parts \ref{case:matsumoto-6}, \ref{case:matsumoto-7}, \ref{case:matsumoto-2}, \ref{case:matsumoto-3}, and \ref{case:matsumoto-base} of \Cref{lem:base-case} are technical and not enlightening, and so are given in \Cref{sec:appendix}.
Here we present first the proof of part \ref{case:matsumoto-interesting} of \Cref{lem:base-case}, and then deal with part \ref{case:matsumoto-bipartite}.

\begin{proof}[Proof of \Cref{lem:base-case} part \ref{case:matsumoto-interesting}]
    Let $G\subsetneqq T(2r,r)$ satisfy $\abs{V(G)}=2r$ and $\chi(G) \leq r$, and let $C_1=\set{x_1,y_1},\dotsc,C_r=\set{x_r,y_r}$ be the colour classes in an arbitrary proper $r$-colouring of $G$.
    We present a strategy for Maker to win the vertex colouring game with $2r-2$ colours.

    Note first that for Maker to win, there only need to be two times throughout the game when a colour is repeated or a blank is played.
    Consider the final play of the game, which is made by Breaker; say this play is at vertex $y_r$.
    The only reason why $y_r$ could not be given the same colour as $x_r$ is if there was some other vertex which already had the same colour as $x_r$.
    Thus it suffices for Maker to force a single colour to be repeated, with both instances of that colour occurring in classes which are completely coloured before the final play of the game.
    For the same reason, if Breaker ever plays a blank before Maker's final play of the game, then Maker will win, so we may assume that Breaker does not play blanks.
    
    Since $G$ is a proper subgraph of $T(2r,r)$, there must be some edge missing between colour classes.
    Relabelling if necessary, we may assume that edge $x_1x_2$ is absent.
    Maker's strategy is to give vertex $x_1$ colour 1, and we split into cases based on Breaker's response.
    
    If Breaker replies by playing at vertex $y_1$, then either he plays colour 1, in which case a colour is repeated and we are done, or he plays a different colour, in which case Maker plays colour 1 at vertex $x_2$.
    Wherever Breaker plays next, Maker then uses her next turn to ensure vertex $y_2$ is coloured with any colour, which is sufficient for Maker to win.

    If Breaker plays colour 1 at some vertex other than $y_1$, which we may w.l.o.g. assume is $x_2$, then Maker uses her next two turns to ensure vertices $y_1$ and $y_2$ are coloured (with arbitrary colours), which is again sufficient to win.

    If Breaker plays some other colour, say colour 2, at some vertex $x_i$ for $i\geq 2$, then Maker can reply by giving $y_i$ colour 2, which is again sufficient to win.

    Thus when there are $2r-2$ colours available, Maker has a winning strategy, as required.
\end{proof}

Next we deal with \Cref{lem:base-case} part \ref{case:matsumoto-bipartite}.
This is achieved via the following result, which establishes a greedy strategy for Maker.

\begin{lemma}
\label{lem:greedy}
    If a graph $G$ has at most $\ceil{k/2}$ vertices of degree at least $k$, then $\blanks(G)\leq k$.
\end{lemma}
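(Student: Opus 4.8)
The plan is to give Maker an explicit two-phase greedy strategy. Write $H$ for the set of vertices of $G$ of degree at least $k$, so that $\abs{H} \le \ceil{k/2}$ by hypothesis. I would first record that Breaker playing a blank can only help Maker: deleting a vertex does not increase any degree, so the hypothesis is preserved for the remaining graph, and Maker may simply pretend the blanked vertex was never there. Hence it suffices to describe Maker's behaviour on her own turns and to check that no vertex ever becomes impossible to colour while Maker follows the strategy. (If $k=0$ the hypothesis forces $G$ to have no vertices and the claim is trivial, so I assume $k\ge 1$.)

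In the \emph{first phase}, on each of her turns Maker colours an arbitrary uncoloured, unblanked vertex of $H$ with any legal colour, and continues until $H$ contains no such vertex. The key claim to establish is: \emph{at any position of the game at which some vertex $v\in H$ is still uncoloured and unblanked, strictly fewer than $k$ colours appear on the neighbourhood of $v$.} The argument is a short counting one: while such a $v$ exists the first phase is still in progress, so every move Maker has made so far was a first-phase move, and these coloured pairwise distinct vertices of $H$, none of them $v$; hence Maker has made at most $\abs{H}-1$ moves. Since Maker moves first and the players alternate, Breaker has made at most as many moves as Maker, so at most $2(\abs{H}-1)\le 2\ceil{k/2}-2\le k-1$ vertices have been played in total, and in particular at most $k-1$ distinct colours occur on the neighbours of $v$. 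This proves the claim, and with it two things at once: Maker can always carry out her prescribed first-phase move (the vertex she wants to colour has a free colour), and no vertex of $H$ becomes impossible to colour before it is played. Since Maker colours one vertex of $H$ per turn, after at most $\abs{H}$ of her turns every vertex of $H$ has been coloured or blanked and the first phase ends.

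In the \emph{second phase} every vertex of $H$ is already coloured or blanked, while every remaining uncoloured vertex $w$ has degree at most $k-1$, so at most $k-1$ colours are ever forbidden at $w$ and $w$ stays colourable for the rest of the game. Maker then just colours an arbitrary uncoloured, unblanked vertex on each of her turns, which is always legal. Combining the phases: no vertex ever becomes impossible to colour, so Breaker can never win, and since every move plays a previously untouched vertex the game terminates with every vertex coloured or blanked, a win for Maker; thus $\blanks(G)\le k$.

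I do not expect a real obstacle here. The only point that needs care is the bookkeeping behind the displayed claim — in particular the parity-uniform inequality $2\ceil{k/2}-2\le k-1$ — since it is precisely this slack that guarantees Maker can clear all of $H$ before Breaker has had time to surround any high-degree vertex with $k$ distinct colours. Everything else is routine.
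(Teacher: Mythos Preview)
Your proof is correct and follows essentially the same greedy strategy as the paper: Maker first colours the high-degree vertices, and the key count $2(\ceil{k/2}-1)\le k-1$ guarantees that all such vertices remain colourable until Maker reaches them, after which every remaining vertex has degree at most $k-1$ and stays playable forever. Your treatment is slightly more detailed (you bound Maker's move count by $\abs{H}-1$ rather than indexing by turn number, and you explicitly dispose of blanks), but the argument is the same.
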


\begin{proof}
    Assume that $k$ colours are available.
    Maker's strategy on her first $\ceil{k/2}$ turns is to arbitrarily colour any vertex of degree at least $k$.
    Note that if all such vertices are coloured, then Maker wins, as a vertex of degree strictly less than $k$ will always be playable.

    Furthermore, just before Maker's $i$\ts{th} turn, $2(i-1)$ vertices have been coloured.
    Thus on Maker's $\ceil{k/2}$\ts{th} turn, $2(\ceil{k/2}-1)\leq k-1$ vertices have been coloured, so all vertices are still colourable.
    Therefore all vertices of degree at least $k$ can be coloured, and so this is a winning strategy for Maker.
\end{proof}

Note that applying \Cref{lem:greedy} with $k=3$ immediately implies that for any subgraph $H$ of $K_{2,r}$, $\blanks(H)\leq 3$, and \Cref{lem:base-case} part \ref{case:matsumoto-bipartite} follows.

We now state and prove two further lemmas which we will need in our proof.
The first lemma is the tool which allows us to use induction, which will be needed for our proof of \Cref{thm:comparison-better}.
Our proof of this lemma makes use of a so-called \emph{imagination strategy}, as popularised by the seminal paper of Bre{\v{s}}ar, Klav{\v{z}}ar, and Rall \cite{brevsar2010domination}.

\begin{lemma}
\label{lem:subgraph-imagination}
    Assume that Maker and Breaker have each played $t$ times in the vertex colouring game with blanks, forming some sequence $P$ of plays, and that $U\sseq V(G)$ is the set of vertices which have been played (so $\abs{U}=2t$). 
    Let $X$ be the set of colours that have been played in $U$ (not including blanks) and let $c_1,\dotsc,c_s\in X$ be some distinct colours.
    Finally, let $D_1,\dotsc,D_s,E_1,\dots,E_q\sseq V(G)$ be some disjoint non-empty independent sets, where $D_i$ includes all vertices which have been given colour $c_i$ for each $1\leq i \leq s$. 
    Then if we let $G'=G-U$ be the subgraph induced by the uncoloured vertices, then we have 
    \begin{align}
    \label{eq:induction}
        \blanks(G;E_1,\dotsc,E_q|P)\leq \blanks(G';D_1',\dotsc,D_s',E_1',\dotsc,E_q')+\abs{X}.
    \end{align}
    Where $D_i'\defined D_i\inter V(G')$ and $E_j'$ is either $E_j\inter V(G')$, or $\emptyset$ if Breaker removed $E_j$ as a class marked for blanks, for all $1\leq i\leq s$ and $1\leq j\leq q$.
\end{lemma}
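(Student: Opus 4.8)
The plan is to describe an explicit strategy for Maker in the game on $G$ with classes $E_1,\dotsc,E_q$ marked for blanks, conditioned on $P$, using as a black box a winning strategy $\sigma'$ for Maker in the game on $G'$ with classes $D_1',\dotsc,D_s',E_1',\dotsc,E_q'$ marked for blanks, run with the palette of size $\blanks(G';D_1',\dotsc,D_s',E_1',\dotsc,E_q')$ \emph{plus} the $\abs{X}$ already-used colours available on the right-hand side of \eqref{eq:induction}. Concretely, Maker maintains an \emph{imagined game} on $G'$ (the colours in $X$ are frozen as "off-limits" in $G'$ except via blanks, see below) and a dictionary translating real moves to imagined moves and back. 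The key translation rule is: a real move that plays colour $c_i \in X$ at a vertex $v$ (necessarily $v \in D_i$, since $D_i$ contains all vertices ever given colour $c_i$, and by independence of $D_i$ this remains legal) is translated into an imagined \emph{blank} played at $v \in D_i'$ in $G'$; conversely, an imagined blank played by Maker at a vertex of $D_i'$ is realised by Maker playing colour $c_i$ there in $G$. Real moves that play a fresh colour (one of the $\blanks(G';\dotsc)$ new colours) at an uncoloured vertex, or a blank at an uncoloured vertex, are copied verbatim into the imagined game; imagined moves of this type are likewise copied verbatim back. Moves played inside the original $U$ after conditioning cannot occur (those vertices are already played), and moves at vertices of $U$ are simply absent from $G'$. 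When Breaker in $G$ plays a blank and exercises his right to delete some marked class $E_j$, Maker has Breaker (in her head) delete the corresponding $E_j'$ in the imagined game; this is exactly why the hypothesis allows $E_j' = \emptyset$.

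The steps, in order, are: (1) set up the imagined game on $G'$ with the stated marked classes and the stated palette, and fix the bijection between the $\abs{X}$ frozen colours and blank-moves-in-$D_i'$ described above, the remaining $\blanks(G';\dotsc)$ colours corresponding to themselves; (2) verify that the translation sends legal moves to legal moves in both directions — the only nontrivial points being that a real play of $c_i$ at $v$ forces $v \in D_i$ hence translates to a legal blank in $D_i'$, and that an imagined blank in $D_i'$ played by Maker, when realised as colour $c_i$ in $G$, is legal because no vertex adjacent to $v$ has received $c_i$ (all $c_i$-vertices lie in the independent set $D_i$); (3) check that Maker following $\sigma'$ in the imagined game yields a well-defined strategy in the real game, i.e. after each translated Breaker move the imagined position is a legal position that $\sigma'$ can respond to, and the move counts stay synchronised (each real move, Maker's or Breaker's, corresponds to exactly one imagined move); (4) argue the endgame condition transfers: if the imagined game ends with Maker winning — every vertex of $G'$ coloured or blanked, with the "game does not end while vertices in a marked class are unplayed" clause respected — then in the real game every vertex of $G$ is coloured or blanked and Maker has won $\blanks(G;E_1,\dotsc,E_q \mid P)$, here using that blanks played in $D_i$ in the real game correspond to colour $c_i$ being (re)used, so no extra colours beyond the $\abs{X}$ old ones plus the new palette are needed; (5) conclude inequality \eqref{eq:induction}.

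The main obstacle I expect is step (2) together with the bookkeeping in step (4): one must be careful that the correspondence "blank in $D_i'$ $\leftrightarrow$ colour $c_i$" is consistent throughout — in particular that Breaker, too, may legally play $c_i$ only inside $D_i$ (which is automatic once we note any earlier $c_i$-vertex lies in $D_i$ and $D_i$ is independent, so a $c_i$-play outside $D_i$ adjacent to a $D_i$-vertex would already have been illegal, while a $c_i$-play outside $D_i$ not adjacent to any $c_i$-vertex can be re-labelled: actually this last case needs the observation that such a vertex could be absorbed, or handled by having Maker treat it as a fresh colour — this edge case is where care is required and is presumably finessed by the precise statement that $D_i$ "includes all vertices which have been given colour $c_i$", read as an invariant Maker can maintain rather than a constraint on Breaker). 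The secondary subtlety is the interaction of the "game does not end early" rule with the deletion of marked classes: when $E_j'$ becomes $\emptyset$ the imagined game loses that protection, but so does the real game lose it for $E_j$, so the two stay in lockstep. Once these points are nailed down the imagination strategy goes through routinely.
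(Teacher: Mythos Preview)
Your imagination-strategy framework is exactly the paper's, and steps (1), (3), (4), (5) are fine. The genuine gap is the one you yourself flag at the end of your proposal: in the real game Breaker \emph{can} legally play $c_i$ at a vertex $v\notin D_i$ (whenever $v$ has no $c_i$-coloured neighbour), and none of your suggested patches work. You cannot ``relabel'' or ``treat $c_i$ as fresh'' because the palette size is fixed; you cannot absorb $v$ into $D_i$ because that may destroy independence; and you cannot read ``$D_i$ contains all $c_i$-vertices'' as an invariant Maker maintains, because Maker does not control Breaker's moves. The hypothesis on $D_i$ refers only to the prefix $P$.

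The missing idea is that the fix is already built into \Cref{def:marked-for-blanks}: when Breaker plays a blank, he is allowed to delete one marked class not containing that vertex. So when real-game Breaker plays $c_i$ at $v\notin D_i$, you translate this to imagined-game Breaker playing a \emph{blank} at $v$ \emph{and removing $D_i'$ from the list of marked classes}. After this, Maker's winning strategy $\sigma'$ on $G'$ is no longer permitted to play blanks in $D_i'$, so the back-translation ``imagined blank in $D_i' \mapsto$ real colour $c_i$'' simply never arises once $c_i$ has escaped $D_i$. Conversely, if $D_i'$ is still marked when $\sigma'$ plays a blank there, every $c_i$-vertex in the real game still lies in the independent set $D_i$, and the real move is legal. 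This is precisely why the otherwise odd-looking ``Breaker may remove a marked class'' clause appears in the definition; your proof goes through once you use it.
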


\begin{proof}
    We prove this via an imagination strategy, constructing a winning strategy for Maker on $G$ with $E_1,\dotsc,E_q$ marked for blanks, starting as in $P$, using $\blanks(G';D_1',\dotsc,D_s',E_1',\dotsc,E_q')+\abs{X}$ colours.
    After the first $2t$ plays of the game in $G$, Maker imagines playing on $G'\sseq G$ with sets $D_1',\dotsc,D_s',E_1',\dotsc,E_q'$ marked for blanks using the remaining colours.
    Call the set of remaining colours $Y$ (so $X$ and $Y$ are disjoint).

    In short, Maker playing a blank in $D_i'$ in the imagined game is equivalent to her playing colour $c_i$ in the real game.

    To move Maker's plays from the imagined game on $G'$ to the real game on $G$ we do the following.
    If Maker plays a colour from $Y$ or a blank in some $E_j'$, then this move may be copied to $G$.
    Otherwise, Maker played a blank at $v\in D_i'$. 
    In the real game on $G$, we instead let Maker play colour $c_i$ at $v$, which is playable on vertices in $D_i$ so long as Breaker has not played colour $c_i$ from move $2t+1$ onward.
    
    To move Breaker's plays from the real game on $G$ to the imagined game on $G'$, we do the following.
    
    If Breaker plays a colour from $Y$ or a blank, then this move is copied over into the imagined game.
    If Breaker plays a blank and removes class $E_i$ from the list of classes marked for blanks, then $E_i'$ is also removed in the imagined game.

    If Breaker plays a colour from $X$, we instead have Breaker play a blank in the imagined game.
    In particular, if Breaker plays $c_i\in X$ at some vertex $v$ outside of $D_i$, then in the imagined game Breaker plays a blank at $v$ and remove $D_i'$ from the list of classes marked for blanks.
    Otherwise Breaker does not remove a class marked for blanks.
    
    This means that if Breaker plays colour $c_i$ outside of $D_i$ from move $2t+1$ onward, possibly preventing colour $c_i$ from being played in $D_i'$, then in the imagined game Breaker will play a blank outside of $D_i'$ and disallow Maker from playing blanks in $D_i'$.
    
    Thus if Maker plays a blank at $x\in D_i'\sseq G'$, then we know that Breaker has not yet played colour $c_i$ from move $2t+1$ onward in the real game, so playing colour $c_i$ at $x$ in the real game is a valid move.

    Therefore Maker's strategy can be converted from the imagined game to the real game, and the result follows.
\end{proof}

Our second lemma deals with a particular case in the induction, wherein the subgraph induced by the uncoloured vertices is isomorphic to $T(2r,r)$.

\begin{lemma}
\label{lem:annotated-turan}
    If $G$ is a $2r$-vertex subgraph of $T(2r,r)$ for some $r$, and $D_1,\dotsc,D_s$ are some $s\geq 0$ of the vertex classes corresponding to colour classes in the unique (up to relabelling colours) $r$-colouring of $T(2r,r)$, then
    \begin{align}
        \blanks(G;D_1,\dotsc,D_s)\leq 2r - s - 1.
    \end{align}
\end{lemma}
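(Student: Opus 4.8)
The plan is to exhibit a winning strategy for Maker using $m \defined 2r-s-1$ colours. Two extreme cases are immediate. If $s=r$ then $D_1\cup\dots\cup D_s=V(G)$, so at every point each unplayed vertex can be blanked by whoever is to move; the game therefore cannot end until the whole vertex set has been played, and Maker wins using no colours at all (and $0\le r-1$). If $s=0$ then, since $G\sseq T(2r,r)$ has maximum degree at most $2r-2<2r-1$, it has no vertex of degree at least $2r-1$, so \Cref{lem:greedy} with $k=2r-1$ gives $\blanks(G)\le 2r-1$. So from now on assume $1\le s\le r-1$. Write $C_1,\dots,C_r$ for the colour classes of $T(2r,r)$, each of size $2$ and independent in $G$, with $D_i=C_i$ for $i\le s$, and $C_i=\{x_i,y_i\}$.

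Maker will play a hybrid of a pairing strategy on the marked classes and a protective greedy strategy on the rest. Pair up the two vertices $x_i,y_i$ of each class. On each of her turns Maker does the following. If Breaker's last move was at a vertex $u$ whose partner $u'$ is still unplayed, Maker \emph{responds} by playing at $u'$: a blank if $u'$ lies in a currently-marked class, and an available colour otherwise. (If in addition Breaker's last move removed some class $C_\ell$ from the marked list and $C_\ell$ still has an unplayed vertex, Maker overrides this and instead colours an unplayed vertex of $C_\ell$ — she prioritises classes that have just become vulnerable.) Otherwise Maker makes a \emph{free} move: she colours an unplayed vertex of a currently-unmarked class if one exists; failing that she blanks an unplayed vertex of a currently-marked class; failing that, every vertex has been played and the game is over.

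The crux is the invariant that at all times every uncoloured vertex $v$ in a currently-unmarked class $C_i$ has at most $2r-s-2=m-1$ coloured neighbours, so that $v$ is always colourable. Granting this, Maker can always carry out her strategy (the only non-trivial point being that the vertex she is told to colour has a colour available, which is exactly the invariant), no vertex of an unmarked class ever dies, and — since marked-class vertices can always be blanked — the game must end with every vertex played, i.e.\ with a win for Maker, proving $\blanks(G;D_1,\dots,D_s)\le m$. To verify the invariant in the case where Breaker has never used the un-marking option, fix such a $v\in C_i$. Its neighbours lie in $V(G)\setminus C_i$, which meets each of the other $r-1$ classes in at most two vertices; the pairing rule guarantees that each of the $s$ marked classes (all of which are $\ne C_i$) contains at most one coloured vertex, so the marked classes contribute at most $s$ coloured neighbours — equivalently, the marked classes are a reservoir of at least $s$ permanently uncoloured vertices lying in the neighbourhood of every such $v$ — while the remaining $r-s-1$ classes other than $C_i$ contribute at most $2(r-s-1)$. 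The total is at most $s+2(r-s-1)=2r-s-2$, as required.

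The main obstacle is Breaker's ability to remove a class from the marked list whenever he plays a blank. When this happens the count above must be revisited: a class that has been un-marked may later acquire a second coloured vertex, threatening to push some $v$'s count above $m-1$, and Maker's priority rule may appear to fall behind since a single Breaker blank can simultaneously create a vulnerable vertex (the blanked vertex's partner) and un-mark a class. The point to exploit is that each un-marking was bought by Breaker with a blank, which permanently removes a vertex from play, and that a blanked vertex is uncoloured; so the blanked vertices can be charged against the extra coloured vertices that un-marked classes contribute, restoring the bound $2r-s-2$. Making this accounting precise — and checking that Maker really does colour the vertices of a freshly un-marked class before Breaker can finish endangering them — is where the real work lies; I expect a short potential-function argument, tracking the number of un-marked classes minus the number of Breaker's blanks so far, to carry it through.
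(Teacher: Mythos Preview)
Your proposal is explicitly incomplete: you yourself write that ``making this accounting precise\dots is where the real work lies'' and that you ``expect a short potential-function argument\dots to carry it through''. That is an honest acknowledgement, but it means what you have submitted is a strategy sketch together with an invariant you hope is true, not a proof. In fact the charging you outline does not obviously close. When Breaker plays his blank at a vertex $u\in C_i$ (the very class containing the vertex $v$ whose coloured neighbours you are counting) and un-marks some $C_j$, the uncoloured vertex $u$ lies inside $C_i$ and hence is \emph{not} a neighbour of $v$; it cannot be charged against the extra coloured vertex that $C_j$ may now acquire. The same problem arises when $C_i$ itself was originally marked and has been un-marked: only $s-1$ of the other $r-1$ classes were originally marked, so your base count already gives $2r-s-1=m$ rather than $m-1$, and you again need a blank outside $C_i$ to compensate. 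Your override rule also interferes with the pairing invariant you rely on (after an override there may be two half-played pairs rather than one), and the strategy description is silent on what Maker does when Breaker un-marks a class while playing at a vertex whose partner is already played. None of these obstacles is obviously fatal, but together they mean the ``short potential-function argument'' would need real care, and you have not supplied it.

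The paper's proof takes a different and much shorter route that sidesteps all of this. Rather than maintain a degree invariant, Maker simply plays blanks in marked classes for as long as she can, always in the same class as Breaker's last move when that is possible. The point is then just that for every original $D_i$, either Maker managed to place a blank in $D_i$, or Breaker un-marked $D_i$ via a blank elsewhere; these blanks are all distinct (Maker's lie in the $D_i$, Breaker's lie outside the class they un-mark, and each Breaker blank un-marks at most one class), so at least $s$ blanks are played in total. One then only has to squeeze out one more saving---either an $(s{+}1)$st blank or a repeated colour---and this is obtained by looking at Breaker's final move in the $2r$-vertex game and examining what has already been played in its class. This global blank-count plus a last-move analysis avoids any need to track coloured-neighbour counts, and in particular is completely insensitive to where Breaker chooses to place his un-marking blanks.
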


\begin{proof}
    We present a strategy for Maker whereby, for each $1\leq i\leq s$, either Maker plays a blank in $D_i$ or Breaker plays a blank to remove $D_i$ as a class marked for blanks, and from this Maker wins with $2r-s-1$ colours.

    Maker's strategy while there is still a class marked for blanks is to always play a blank (if possible), playing in the same class as Breaker's previous move if that class is not already fully coloured, and in an arbitrary class marked for blanks otherwise.
    Maker's aim is for each original class $D_i$ marked for blanks to have some blank played either in $D_i$ by Maker, or outside of $D_i$ by Breaker, who then removed $D_i$ from the list of classes marked for blanks.
    
    Consider some such $D_i$.
    The only way Maker could fail to achieve her goal is if $D_i$ was filled before she had a chance to play in it, and Breaker never removed it as a class marked for blanks.
    But as soon as Breaker plays in $D_i$, Maker will play a blank in $D_i$, and so Maker succeeds in her goal.

    In particular, at least $s$ blanks are played in the game, so it suffices to prove that either $s+1$ blanks were played or some colour was repeated.

    Consider the final move of the game, which is made by Breaker since there are an even number of vertices.
    Assume that all colours have been played at least once, as otherwise Breaker has a legal move, and Maker wins.
    Say it is at a vertex $x$ in some class $C = \set{x,y}$, which may or may not be some $D_i$.
    The vertex $y$ has already been played. 
    If $y$ was given some colour $c$ already, then either Breaker can give $x$ colour $c$, and Maker wins, or colour $c$ has already been given to another vertex in $G$, in which case Maker also wins.
    If $y$ has been played and is blank, then note that this blank must have been played by Maker, and $C=D_i$ for some $i$, as otherwise Maker would already have played at $x$.
    But then either playing a blank is a legal move at $y$, meaning at least $s+1$ blanks are played, or Breaker played a blank which removed $C$ as a class marked for blanks.
    But in this case there were two blanks associated with class $C$, and so there must be a colour remaining for Breaker to play.

    Thus Maker wins with $2r-s-1$ colours, as required.
\end{proof}

Together with the proofs in \Cref{sec:appendix} we now have all the preliminary results we require.
As \Cref{lem:base-case} deals with the base case of the proof of \Cref{thm:comparison-better}, only the inductive step remains, which we present in the following section.

\section{Proof of Theorem \ref{thm:comparison-better}}
\label{sec:main-proof}

Let $G=(V,E)$ and let $V=C_1\union\dots\union C_p$ be a partition of $V$ into $p$ independent sets.
We shall proceed by producing the first few moves of Maker's strategy, and then remove played vertices to proceed by means of \Cref{lem:subgraph-imagination} and \Cref{lem:annotated-turan}, as required for the induction.
When we apply \Cref{lem:subgraph-imagination}, we will do so in such a way that a class marked for blanks is one of the independent sets $C_1,\dots,C_p$.
The base case of our induction is provided by \Cref{lem:base-case}, and so here we present only the inductive step.

We first present in \Cref{subsec:maker-strat} an outline of our proof, the notation we will use, and some of the strategies Maker will employ.
We then demonstrate that the strategy works in the following two sections, conditioning on Maker's first move.
The case in which Maker plays a blank is dealt with in \Cref{subsec:maker-plays-blank}, and then the case in which Maker plays a colour is dealt with in \Cref{subsec:maker-plays-colour}.

\subsection{Notation and setup}
\label{subsec:maker-strat}

If there are $s\geq 1$ nonempty sets $D_1,\dotsc,D_s$ marked for blanks as in the statement of \Cref{thm:comparison-better}, with $\abs{D_1}\leq\dots\leq\abs{D_s}$, then Maker plays a blank in $D_1$ as the first move.

Assuming instead that $s=0$, if there is some class with odd size, then Maker plays colour 1 at a vertex $u\in C_i$ of maximal degree in its class, where $C_i$ is a class of minimal odd size.

Otherwise, either \Cref{lem:annotated-turan} applies, or there is a class of size at least 4, and Maker plays arbitrarily in a class of minimum even size.

In any case, we may w.l.o.g. say that Maker played at vertex $u$ in class $C_1$, and that Breaker replies at a vertex $v$ in either $C_1$ or $C_2$.

If further moves of Maker's strategy need to be specified, they depend on Breaker's reply to Maker's first move, and so are detailed in the analysis to follow.
One strategy which may be employed by Maker in her second turn and onward is to play in the same class as Breaker's most recent move, using the same colour.
We refer to such a move as Maker `copying Breaker'.

Our proof will proceed by conditioning on the first $2t$ moves of the game (i.e. $t$ moves for each of Maker and Breaker), and then applying \Cref{lem:subgraph-imagination} and our inductive hypothesis.
Let $P$ be a list of these plays, and denote by $G'$ the graph resulting from our application of \Cref{lem:subgraph-imagination}.
Furthermore, let $X$ be the set of colours played on $V(G)\setminus V(G')$, and $n$ and $n'$ be the orders of $G$ and $G'$ respectively, with $n-n' = 2t$.
Let $\cD$ and $\cD'$ be the sets of nonempty classes marked for blanks in $G$ and $G'$ respectively.
Finally, let $p'=\abs{\set{i\st C_i\inter G'\neq \emptyset}}$ be the number of independent sets in our partition of $G'$.

Our task is to construct a strategy for Maker so that, in the cases for which we cannot conclude our result directly, $G'$ satisfies the assumptions of \Cref{thm:comparison-better}, and so we can conclude by induction.
We now spell out this method in more detail.

We will find some $t\geq 1$ and a strategy for Maker's first $t$ moves so that either we can directly conclude that inequality \eqref{eq:comparison-target} holds for $G$, or that $n'\geq 2$, and the following two statements hold.
\begin{align}
\label{eq:strange-induction-condition}
    (G'\not\simeq T(2r,r)\text{ and }n'\geq 6)\text{ or }\abs{\cD'}\geq 1\text{ or }p-p'+(n-n')/2 \geq \abs{X} + 1.
\end{align}
\begin{align}
\label{eq:general-induction-target}
    (p-p')+(n-n') / 2 \geq \abs{X} + \max(\abs{\cD} - \abs{\cD'},0).
\end{align}

We now show why these suffice to conclude \Cref{thm:comparison-better}.

Firstly, as $n'\geq 2$, $G'$ satisfies either case \ref{case:matsumoto-base} of \Cref{lem:base-case} (if $n'\leq 5$), or the conclusion of \Cref{thm:comparison-better} (if $n'\geq 6$, by induction), so, noting that $\chi(G')\leq p'$ and combining with \Cref{lem:subgraph-imagination,lem:annotated-turan}, we find that
$$\blanks(G;D_1,\dotsc,D_s|P)\leq \abs{X} + p' + \floor{n'/2} - 1.$$

If $\abs{\cD'} < \abs{\cD}$, then we know that $\max(\abs{\cD} - \abs{\cD'}, 0)\geq 1$.
Also, recalling that $n-n'$ is even, inequality \eqref{eq:general-induction-target} implies that
$$0\leq p-p' + \floor{n/2} - \floor{n'/2} -\abs{X} - 1,$$
and so we deduce that
\begin{align}
\label{eq:conditioned-induction-target}
    \blanks(G;D_1,\dotsc,D_s|P)\leq p+\floor{n/2}-2.
\end{align}

If $\abs{\cD'} \geq \abs{\cD}$, then condition \eqref{eq:strange-induction-condition} implies that $G'$ either satisfies the assumptions of \Cref{thm:comparison-better} (if either of the first two clauses hold), or inequality \eqref{eq:conditioned-induction-target} holds (if the third clause holds).
In the case that $G'$ satisfies the assumptions of \Cref{thm:comparison-better}, we may apply the induction hypothesis and \Cref{lem:subgraph-imagination} to find the following.
$$\blanks(G;D_1,\dotsc,D_s|P)\leq \abs{X} + p' + \floor{n'/2} - 2.$$
Combining the above with inequality \eqref{eq:general-induction-target} and $\max(\abs{\cD}-\abs{\cD'},0)\geq 0$, we again find that inequality \eqref{eq:conditioned-induction-target} holds.

As we will prove that \eqref{eq:strange-induction-condition} and \eqref{eq:general-induction-target} hold for an exhaustive list of possible sequences $P$ of moves, we can deduce \Cref{thm:comparison-better} from \eqref{eq:conditioned-induction-target}.

\subsection{Maker plays a blank}
\label{subsec:maker-plays-blank}

Recalling that we say Maker's first move is at $u$, and Breaker's reply is at $v$, we will set $G'=G-\set{u,v}$.
We may assume that $n\geq 4$ (as otherwise we could conclude by \Cref{lem:base-case}), and so we see that $n'\geq 2$.
Note that as $\abs{\cD}\geq 1$, either $\abs{\cD'}\geq 1$, or, assuming that inequality \eqref{eq:general-induction-target} holds, $p-p'+(n-n')/2\geq \abs{X}+1$.
In either case, condition \eqref{eq:strange-induction-condition} holds.
Thus it suffices to prove inequality \eqref{eq:general-induction-target} holds.

Indeed, if Breaker plays a blank, then we need to prove the following:
\begin{align}
\label{eq:induction-blank-blank}
    p-p'+1 \geq \max(\abs{\cD} - \abs{\cD'},0).
\end{align}

Breaker, having played one blank, can remove one class marked for blanks.
It is possible that some $r\leq 2$ other classes marked for blanks have been completely coloured or played by blanks, and so will not be included in $\cD'$.
However, recalling that each $D_i$ is one of the classes $C_1,C_2,\dotsc,C_p$, we see that $p-p'\geq r$ and $\abs{\cD} - \abs{\cD'} \leq r+1$, from which we deduce inequality \eqref{eq:induction-blank-blank}.

If Breaker plays a colour, then we need to prove the following.
\begin{align}
\label{eq:induction-blank-col}
    p-p' \geq \max(\abs{\cD} - \abs{\cD'},0).
\end{align}

Here, the only classes marked for blanks which are removed are those which are completely coloured or played by blanks.
So for the same reason as the previous case we deduce inequality \eqref{eq:induction-blank-col}.
This completes this case of the induction.

\subsection{Maker plays a colour}
\label{subsec:maker-plays-colour}

We assume by relabelling that Maker played colour 1 at $u$, and again call the vertex played by Breaker $v$, and w.l.o.g. assume that either $v\in C_1$ or $v\in C_2$.
By relabelling colours, we may further assume that the play is of colour 1 or 2, or a blank.

We consider Maker's first $t$ turns, and so we remove $2t$ vertices from $G$ to form $G'$.
Recalling that $\abs{\cD}=0$ (as otherwise Maker would have played a blank), we find by simplifying inequality \eqref{eq:general-induction-target} and condition \eqref{eq:strange-induction-condition} that we need to prove one of the following inequalities.
\begin{align}
    p-p'+t &\geq \abs{X} & &\text{ if }(G'\not\simeq T(2r,r)\text{ and }n'\geq 6)\text{ or }\abs{\cD'}\geq 1\label{eq:induction-col},\\
    p-p'+t &\geq \abs{X}+1 & &\text{ otherwise.}\label{eq:induction-col-safe}
\end{align}

Note first that $p-p'+t \geq \abs{X}$ is implied by $p-p'+t \geq \abs{X} + 1$, and so proving the latter is sufficient, whether the conditions of \eqref{eq:induction-col} hold or not.
Indeed, when the unconditioned inequality of \eqref{eq:induction-col-safe} holds, either case \ref{case:matsumoto-base} of \Cref{lem:base-case}, or \Cref{thm:comparison-better} (by induction), and the observation that $\chi(G')\leq p'$ can be applied to $G'$ to immediately deduce the desired result.

We now present three simple observations, which we shall often refer back to.

\begin{observation}
\label{obs}
    If $\abs{\cD}=0$, then we may assume that all of the following hold, as otherwise we could deduce either inequality \eqref{eq:comparison-target} or inequality \eqref{eq:induction-col}.
    \begin{enumerate}
        \item There are at least two classes $C_i$ in $G$, i.e. $p\geq 2$. \label{obs:p-at-least-2}
        \item If $t=1$, then $n'\geq 6$. \label{obs:t=1}
        \item There is no class of size 1, i.e. for each $i\leq p$, $\abs{C_i}\geq 2$. \label{obs:no-size-1-class}
    \end{enumerate}
\end{observation}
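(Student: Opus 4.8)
The plan is to prove the three items of \Cref{obs} independently, in each case conditioning only on Maker's opening move (and, for item \ref{obs:no-size-1-class}, also on Breaker's reply), and then either invoking \eqref{eq:comparison-target} outright or feeding into the reduction already prepared in \Cref{subsec:maker-plays-colour}. For item \ref{obs:p-at-least-2}: if $p=1$ then $G$ is edgeless, every uncoloured vertex remains playable forever, so $\blanks(G)\le 1$, and since $n\ge 6$ this already gives $\blanks(G;D_1,\dotsc,D_s)\le\blanks(G)\le 1\le p+\floor{n/2}-2$. For item \ref{obs:t=1}: orders $6$ and $7$ satisfy \eqref{eq:comparison-target} by parts \ref{case:matsumoto-6} and \ref{case:matsumoto-7} of \Cref{lem:base-case} -- and \ref{case:matsumoto-6} does apply at order $6$ because $G\not\simeq T(2r,r)$ throughout \Cref{subsec:maker-plays-colour} -- so in the inductive step we may assume $n\ge 8$, whence $t=1$ forces $n'=n-2\ge 6$; if this ever failed we would have $n\le 7$ and conclude from the base case.

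Item \ref{obs:no-size-1-class} carries the weight. Assume some class has size $1$; by the prescription in \Cref{subsec:maker-strat} we may take $C_1=\set{u}$ with Maker opening at $u$ in colour $1$, and let $v\in C_2$ be Breaker's reply. Put $G'=G-\set{u,v}$, so $t=1$, $n'=n-2\ge 6$, and $C_1$ is destroyed, giving $p-p'\ge 1$. I would then split on Breaker's reply. If Breaker plays a blank or colour $1$ at $v$, only one colour has been used, so $\abs X=1$ and $p-p'+t\ge 2=\abs X+1$, which is \eqref{eq:induction-col-safe}. If Breaker plays colour $2$ at $v$ and $\abs{C_2}=1$, then $C_2$ is destroyed as well, so $p-p'=2$ and again $p-p'+t=3=\abs X+1$. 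The remaining case is that Breaker plays colour $2$ at $v$ with $\abs{C_2}\ge 2$, where one finds only $p-p'+t=2=\abs X$; here I would apply \Cref{lem:subgraph-imagination} with $c_1=2$ and $D_1=C_2$, which is legitimate since colour $2$ has so far appeared only inside $C_2$. That leaves the nonempty set $C_2\setminus\set{v}$ marked for blanks in $G'$, so $\abs{\cD'}\ge 1$, the side condition of \eqref{eq:induction-col} is met, and \eqref{eq:induction-col} reads $2\ge 2$.

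The only genuinely delicate point is this final subcase. Without the marking manoeuvre $G'$ could be isomorphic to some Turán graph $T(2r,r)$, forcing the use of the weaker bound of \Cref{lem:annotated-turan} (equivalently part \ref{case:matsumoto-base} of \Cref{lem:base-case}), which is precisely the $+1$ of slack we cannot afford when $p-p'+t=\abs X$. Exploiting that Breaker's colour is confined to the single class $C_2$, so that $C_2\setminus\set{v}$ persists as a class marked for blanks in $G'$, places $G'$ within the scope of \Cref{thm:comparison-better} regardless of its isomorphism type, and so dissolves the obstruction. Everything else is the arithmetic in \eqref{eq:general-induction-target}, \eqref{eq:strange-induction-condition}, \eqref{eq:induction-col} and \eqref{eq:induction-col-safe} already organised at the start of \Cref{subsec:maker-plays-colour}, so the write-up should be short.
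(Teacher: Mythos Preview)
Your proposal is correct and follows essentially the same approach as the paper: items \ref{obs:p-at-least-2} and \ref{obs:t=1} are handled identically, and for item \ref{obs:no-size-1-class} both arguments set $G'=G-\set{u,v}$ with $p-p'\geq 1$, $\abs{X}\leq 2$, and then split into the same three subcases (Breaker fills a singleton class, Breaker keeps $\abs{X}=1$, or Breaker's colour is confined to $C_2$ so it may be marked for blanks). The only cosmetic difference is that the paper frames the case split as ``what to do when $G'\simeq T(2r,r)$'' while you organise it directly by Breaker's move, and you explicitly separate out the blank reply whereas the paper absorbs it into the $\abs{X}=1$ case.
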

\begin{proof}
    We deal with the three points in order.
    \begin{enumerate}
        \item If $p=1$, then $G$ has no edges and inequality \eqref{eq:comparison-target} follows immediately.
        \item If $t=1$ and $n'\leq 5$, then $n\in\set{6,7}$, and the result follows from \Cref{lem:base-case} \ref{case:matsumoto-6} or \ref{case:matsumoto-7}.
        \item If there was a class $\set{u}$ of size 1, then Maker will play at $u$. 
        If Breaker plays at $v$, then set $G'=G-\set{u,v}$, so $\abs{X}\leq 2$ and $p-p'\geq 1$.
        By point \ref{obs:t=1}, we know $n'\geq 6$.
        If $G'\simeq T(2r,r)$, then either Breaker replied in a class of size 1, so $p-p'=2$ and \eqref{eq:induction-col-safe} holds, or Breaker repeated colour 1, so $\abs{X}=1$ and \eqref{eq:induction-col-safe} holds, or we may mark some class for blanks, whence $\abs{\cD'}\geq 1$ and \eqref{eq:induction-col} holds.
    \end{enumerate}
\end{proof}

We now distinguish an exhaustive list of five cases depending on what Breaker's reply to Maker might be, dealing with each case separately.

\subsubsection{Breaker plays colour 2 and no class is completely coloured}
We see that Breaker plays colour 2 in either $C_1$ with $\abs{C_1} \geq 3$, or in $C_2$ with $\abs{C_2}\geq 2$.
Note that the only other situation in which Breaker can play colour 2 that we need to consider is when he plays in $C_1$ and $\abs{C_1}=2$, which is dealt with in \Cref{subsubsec:c1-of-size-2}.
In the case considered here, we need to track some further moves of the game, for which Maker's strategy is as follows.
For as long as Breaker plays previously-unplayed colours which can be copied, and no class is completely coloured, Maker keeps copying Breaker.

We now show that, while Maker is able to copy Breaker, no colour has been repeated, and no class has been filled, Breaker can always make a legal move using one of the remaining colours (as otherwise he would win).
Indeed, after Maker plays in round $t$ (so Maker has played $t$ times and Breaker $t-1$), the above assumptions imply that precisely $t$ different colours have been played.
Moreover, Breaker has always played previously-unplayed colours, and Maker has always copied Breaker, and so every played colour appears in precisely one class.
Thus, if some class has been played in, but still contains unplayed vertices, then Breaker can legally play in that class.
The only other case to consider is when all classes that have been played in are completely coloured.
However, this implies that only one class, $C_1$, has been played in, and it was completely coloured after Maker's $t$\ts{th} turn.
But then $\abs{C_1}$ is odd, and so, recalling Maker's strategy discussed at the start of \Cref{subsec:maker-strat}, we see that all classes have odd sizes at least $\abs{C_1}$, and so there must be more colours which have not yet been used, and so Breaker has a legal move.

Letting $U$ be the set of vertices played in the first $t$ rounds of the game, and $G'=G-U$, we take $t$ minimal so that either $\abs{X}=t$ or $p'<p$.

We consider five cases, depending on what vertices remain unplayed after the first $t$ rounds.
Note that, as we are removing many vertices at once, care must be taken to show that we have $n'\geq 2$; this is guaranteed by the first two of the following cases, which deal with $n'=0$ and $n'=1$ separately.
\begin{enumerate}
    \setlength\itemsep{1em}
    \item $G$ is completely coloured.
    We have $\abs{X}\leq t+1$, $n=2t$, and $p\geq 2$.
    However, we know by \Cref{obs} \ref{obs:no-size-1-class} that no class has size 1.
    Thus Breaker's last move must be into a class $C_i$ which already has some vertices coloured.
    As Breaker had not repeated a colour before his last move, if there are only $t$ colours, then no new colour is available.
    However, we know that the other colours in $C_i$ appear in no other class, so Breaker can be forced to repeat one such colour.

    \item $G$ has only one vertex left uncoloured, in some class $C_i$.
    In this case we do not use the induction hypothesis, but instead conclude directly.

    First note that $p\leq 3$, and $p\geq 2$ by \Cref{obs} \ref{obs:p-at-least-2}.
    If $p=3$, then on turn $t$ Maker and Breaker both filled a class in; say $C_1$ and $C_2$. 
    But then, as $\abs{C_3}\geq 2$, there must be some colour $c$ played only in $C_3$ before Breaker's final turn.
    If Breaker did not play colour $c$ on his final turn, Maker can give the final vertex colour $c$.
    Otherwise, colour $c$ will have been played two times, and so Maker can play a new colour at the final vertex.

    Now, as $p=2$, if we have $\abs{C_i}=2$ then we are done by \Cref{lem:base-case}.
    If not, then as at most one colour appears in two different classes (as Breaker's final play might have been to repeat a colour), there is some colour $c$ which has been played only in class $C_i$. 
    Then Maker may give the final vertex colour $c$, and we may directly see that inequality \eqref{eq:comparison-target} holds.
    
    Thus $t$ colours suffice and we directly see that \eqref{eq:comparison-target} holds.
    We may now assume in the remaining cases that $n'\geq 2$.
    
    \item Breaker repeats a colour before any class is filled.
    In this case $p=p'$, and $\abs{X}=t$.
    Then by assumption Breaker's first move was to play colour 2, so we know that $t\geq 2$, and thus some colour occurs in only one class, say $C_i$.
    The remainder of $C_i$ can then be marked for blanks, so $\abs{\cD'}\geq 1$, as required for condition \eqref{eq:induction-col} to hold.

    \item Some class $C_i$ is filled on Breaker's $t$\ts{th} move. 
    We have $p-p'\geq 1$ and $\abs{X}\leq t+1$, as Maker always copied Breaker's colour.
    Thus \eqref{eq:induction-col} holds, and we are done unless $p-p'=1$, $\abs{X}=t+1$, no class may be marked for blanks, and either $G'\simeq T(2r,r)$ or $n'\leq 5$, so assume that this is the case.

    As $\abs{X}=t+1$, we know that no colour appears in more than one class, so either some class may be marked for blanks, or all plays were in class $C_1$, and thus $\abs{C_1}$ is even, so $n$ is also even.
    By assumption, $\abs{C_1}\geq 3$, so $\abs{C_1}\geq 4$.
    Maker played in a class of minimum even size by assumption, and $p\geq 2$ by \Cref{obs} \ref{obs:p-at-least-2}, so as $n'\leq 5$, we must have $G'\simeq E_4$, the empty graph on four vertices.
    But in this case, inequality \eqref{eq:comparison-target} may be checked to hold, so we are again done.

    \item Some class $C_i$ is filled on Maker's $t$\ts{th} move, and Breaker then plays some colour, call it $c$, in a different class, w.l.o.g. $C_2$.
    We have that $\abs{X}\leq t+1$ and $p-p'\geq 1$, so $p-p'+t\geq\abs{X}$.
    Furthermore, we know that vertices in at least two classes have been played, so we must have either $p-p'=2$, or colour $c$ had been played prior to Breaker's $t$\ts{th} move (and so $\abs{X}\leq t)$, or colour $c$ appears only in $C_2$, so class $C_2$ may be marked for blanks (and $\abs{\cD'}\geq 1$), and so we satisfy \eqref{eq:induction-col-safe} in the first two cases, and \eqref{eq:induction-col} in the third.
\end{enumerate}

\subsubsection{Breaker plays a blank}

There are only two sub-cases here.

If Breaker plays a blank in $C_1$ and $\abs{C_1} = 2$, then set $G'=G-C_1$, so we have $p-p'=t=\abs{X}=1$, satisfying \eqref{eq:induction-col-safe} as required.
    
If Breaker plays a blank in any other case, then we set $G'=G-\set{u,v}$ and may mark $C_1$ for blanks, satisfying \eqref{eq:induction-col} as $\abs{\cD'}=1$.

\subsubsection{Breaker plays colour 1 in class \texorpdfstring{$C_2$}{C2}}

We may set $G'=G-\set{u,v}$, so $\abs{X}=1$, $n'\geq 6$ by \Cref{obs} \ref{obs:t=1}, and we satisfy \eqref{eq:induction-col} unless $G'\simeq T(2r,r)$.
But if $G'\simeq T(2r,r)$ then $\abs{C_1}=\abs{C_2}=3$, and by assumption Maker played at a vertex $u$ of maximal degree in $C_1$, which was still missing an edge to $C_2$.
Thus all other vertices in $C_1$ must be missing some edge when compared to the complete multipartite graph.

Assuming $G'\simeq T(2r,r)$, we see that all three vertices in $C_1$ had no edge to $v$.
Say $C_1=\set{u,x,y}$. Then Maker's strategy is to play colour 1 again, at vertex $x$.
Say that Breaker replies at $z\in C_i$, and set $G''=G-\set{u,v,x,z}$, noting that now $t=2$.
Let $p''$ be the number of the sets $C_1,\dotsc,C_p$ with nonempty intersection with $G''$.

If Breaker gives vertex $z$ colour 1, we have $\abs{X}=1$ when passing to $G''$.
Otherwise, Breaker w.l.o.g. gives $z$ colour 2. 
If $z$ was the last uncoloured vertex in $C_i$, then we have $\abs{X}=2$ and $p-p''\geq 1$.
If $z$ is not the last uncoloured vertex in $C_i$, then we may note that colour 2 has been played only at $z$ and so we may mark the class $C_i$ for blanks in $G''$.
Thus either \eqref{eq:induction-col-safe} holds or some class is marked for blanks, so $\abs{\cD'}\geq 1$ and \eqref{eq:induction-col} holds, regardless of whether $G''\simeq T(2(r-1),r-1)$ or not.

\subsubsection{Breaker plays in class \texorpdfstring{$C_1$ and $\abs{C_1}=2$}{C1 and |C1|=2}}
\label{subsubsec:c1-of-size-2}

Set $G'=G-C_1$.
Then we have removed at most 2 colours and exactly one class, and by \Cref{obs} \ref{obs:t=1}, $n'\geq 6$.
Note we cannot have $G'\simeq T(2r,r)$, as then $G$ would be a subgraph of $T(2(r+1),r+1)$ on $2(r+1)$ vertices, so \Cref{lem:base-case} part \ref{case:matsumoto-interesting} would apply to $G$ and we would be able to conclude our desired result directly.
Thus \eqref{eq:induction-col} holds, as required.
    
\subsubsection{Breaker plays colour 1 in class \texorpdfstring{$C_1$ and $\abs{C_1}\geq 3$}{C1 and |C1|>=3}}

Set $G'=G-\set{u,v}$, removing two vertices and one colour, and mark $C_1$ for blanks.
Then $t=\abs{X}=\abs{\cD'}=1$, thus satisfying \eqref{eq:induction-col}, as required.
\\

In every case we have verified that Maker's strategy as detailed in \Cref{subsec:maker-strat} leads to Maker winning the vertex colouring game with blanks.
Therefore our proof is complete and \Cref{thm:comparison-better} holds.

%----------------------------------------------------------%

\section{The marking game}
\label{sec:marking-game}

Matsumoto \cite{matsumoto2019difference} also asked whether an inequality similar to \Cref{thm:comparison-basic} holds for the so-called marking game, or graph colouring game, which is defined as follows.

\begin{definition}[The marking game]
    Some number $k$ is fixed.
    Maker and Breaker alternately \emph{mark} vertices of a graph $G=(V,E)$ by assigning them to a set $M$.
    A vertex may only be marked if it has at most $k-1$ marked neighbours.
    Maker wins if eventually $M=V$, whereas Breaker wins if some vertex becomes impossible to mark.
    The \emph{marking number} $m(G)$, or \emph{game colouring number} $\col_g(G)$, of a graph $G$ is the minimal $k$ for which Maker can win the marking game on $G$.
\end{definition}

In particular, Matsumoto asked whether $m(G)-\chi(G)\leq\floor{n/2}-1$.
We answer this question in the negative with the following result.

\begin{theorem}
    For any constant $\eps > 0$, there is some integer $n$ and a graph $G$ on $n$ vertices for which $m(G)-\chi(G)>(1-\eps)n$.
\end{theorem}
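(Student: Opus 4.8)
The goal is to construct, for any $\eps>0$, a graph $G$ with chromatic number small relative to $n$ but $m(G)$ as large as $(1-\eps)n$. The natural approach is to take $G$ to be bipartite (so $\chi(G)=2$) and engineer the bipartite structure so that Breaker can force a vertex to accumulate $n-1-o(n)$ marked neighbours before it is marked. A clean way to do this is to use an \emph{incidence-type} construction: take a large bipartite graph on parts $A$ and $B$, where $A$ is a set of ``target'' vertices of very high degree and $B$ provides the neighbourhoods. Concretely, one can take $B$ to be a ground set of size $m$, let $A$ consist of many vertices each adjacent to a large subset of $B$, and arrange that no matter how Maker plays, Breaker can keep marking vertices of $B$ adjacent to some not-yet-marked vertex of $A$ until that vertex has nearly all of $B$ marked as a neighbour.

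**Key steps.** First I would fix the construction: let $G = K_{\ell, m}$ be the complete bipartite graph with parts $A$ (size $\ell$) and $B$ (size $m$), where $m$ is large and $\ell$ is chosen so that $\ell = \eps m / 2$ or thereabouts, giving $n = \ell + m$. Then $\chi(G) = 2$. The heart of the argument is to show $m(K_{\ell,m})$ is close to $m$: I claim that with $k = m - \ell$ colours (say), Breaker wins. Breaker's strategy is simply to always mark an unmarked vertex of $B$. Since $|B| = m$ and the game lasts at most $n$ turns, with Maker going first, after roughly $j$ full rounds Breaker has marked about $j$ vertices of $B$, and Maker has marked at most $j+1$ vertices total. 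For Maker to prevent a vertex $a \in A$ from becoming unmarkable, every vertex of $A$ must be marked before $B$ accumulates $k$ marked vertices — but $B$ has $m$ vertices, all adjacent to all of $A$, so once $k = m - \ell + 1$ vertices of $B$ are marked (which happens after about $m - \ell$ of Breaker's moves) any still-unmarked vertex of $A$ is dead. Maker can mark at most one vertex of $A$ per round, so she needs $\ell$ rounds to save all of $A$, but Breaker kills $A$ after $\approx m - \ell$ of his own moves; choosing $\ell < m - \ell$, i.e. $m > 2\ell$, Breaker wins. This shows $m(G) \geq m - \ell + 1$, hence $m(G) - \chi(G) \geq m - \ell - 1 = n - 2\ell - 1$, and with $\ell = \lceil \eps n / 2 \rceil$ this exceeds $(1-\eps)n$ for $n$ large.

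**Carrying it out.** The steps in order: (i) define $G = K_{\ell,m}$ with the stated parameters and record $n = \ell + m$ and $\chi(G) = 2$; (ii) state Breaker's strategy (always mark an unmarked vertex of $B$, which is always legal for him as long as $B$ is not exhausted, since vertices of $B$ are pairwise non-adjacent) and count moves to show that after his $(m-\ell)$-th move at least $m - \ell$ vertices of $B$ are marked while Maker has completed at most $m-\ell$ moves total, hence has marked at most $m-\ell$ vertices of $A$, so at least one vertex of $A$ is still unmarked; (iii) observe that this unmarked vertex of $A$ now has $\geq m - \ell$ marked neighbours, so it cannot be marked when $k \leq m - \ell$, giving Breaker the win and thus $m(G) > m - \ell$; (iv) conclude $m(G) - \chi(G) > m - \ell - 2 = n - 2\ell - 2 > (1-\eps)n$ for suitable $n$ by taking $\ell = \lfloor \eps n/3 \rfloor$ say, absorbing constants.

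**Main obstacle.** The only delicate point is the bookkeeping on \emph{who moves when} — Maker moves first, and one must be careful that Breaker can always execute ``mark a vertex of $B$'' (he can, until $B$ is full, and by then the game is long over) and that the move-counting inequality $\#\{\text{Maker's moves}\} \leq \#\{\text{Breaker's moves}\} + 1$ is applied correctly so that Maker genuinely cannot have marked all of $A$ before the critical vertex of $A$ dies. A secondary subtlety is ensuring $B$ is not exhausted prematurely — but since $|B| = m > 2\ell > \ell = |A|$ and the killing happens after only $m - \ell$ of Breaker's moves, there are always unmarked vertices of $B$ available when Breaker needs one. Once the parameters are chosen with a little slack (e.g. $m = \lceil 3n/(2+\eps)\rceil$-ish, or just $\ell$ a small constant fraction of $n$), everything goes through and no genuine difficulty remains; this is why I would keep the construction as simple as possible rather than chasing the optimal constant.
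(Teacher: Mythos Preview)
Your construction does not work: for the complete bipartite graph $K_{\ell,m}$ with $\ell\leq m$ one has $m(K_{\ell,m})=\ell+1$, so $m(G)-\chi(G)=\ell-1\leq n/2-1$ and the bound $(1-\eps)n$ is unreachable for $\eps<1/2$. To see the upper bound, Maker simply marks a vertex of $A$ on each of her first $\ell$ turns; just before her $j$th move Breaker has marked at most $j-1$ vertices of $B$, so the target $A$-vertex has at most $j-1\leq\ell-1$ marked neighbours and is markable whenever $k\geq\ell$. Once $A$ is exhausted every $B$-vertex has exactly $\ell$ marked neighbours and is markable for $k\geq\ell+1$. Hence Maker wins with $k=\ell+1$.

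Your move-counting contains the contradiction explicitly. In step~(ii) you assert that after Breaker's $(m-\ell)$th move Maker has made at most $m-\ell$ moves, ``hence has marked at most $m-\ell$ vertices of $A$, so at least one vertex of $A$ is still unmarked.'' That deduction requires $\ell>m-\ell$, i.e.\ $m<2\ell$; yet in the same paragraph you choose $m>2\ell$. Under your parameter choice $m>2\ell$, Maker has ample time to mark every vertex of $A$ long before $m-\ell$ vertices of $B$ are marked, and your Breaker strategy fails.

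The paper's proof avoids any strategy argument entirely: it takes $G=T(r^2,r)$, the complete $r$-partite graph with all parts of size $r$, so $n=r^2$, $\chi(G)=r$, and every vertex has degree exactly $r(r-1)$. Whatever the play, the last vertex to be marked has all $r(r-1)$ of its neighbours already marked, forcing $m(G)\geq r(r-1)+1$ and hence $m(G)-\chi(G)\geq r(r-2)=(1-2/r)n>(1-\eps)n$ once $r>2/\eps$. The point you missed is that to push $m(G)$ close to $n$ you need the \emph{minimum} degree close to $n$, which bipartite graphs cannot provide; complete multipartite graphs with many parts do.
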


\begin{proof}
    Let integer $r$ be large enough that $2/r<\eps$.
    Then let $G=T(r^2,r)$ be the complete $r$-partite graph on $n=r^2$ vertices.
    Firstly, we know that $\chi(G)=r$.
    Next, consider the last vertex to be marked.
    It has $r(r-1)$ neighbours, all of which are already marked, and so $m(G)\geq r(r-1)$.

    Thus $m(G)-\chi(G)\geq r(r-2)=n(1-2/r)>(1-\eps)n$, as required.
\end{proof}

We may also note that the Tur\'{a}n graphs maximise the value $m(G)$ for fixed order and chromatic number, so the above result is in this sense sharp.

%----------------------------------------------------------%

\section{Conclusion and future work}
\label{sec:future}

In this paper, we have resolved the problem of Matsumoto \cite{matsumoto2019difference} to determine all graphs $G$ for which $\chrom(G)-\chi(G)=\floor{n/2}-1$.
As noted in \cite{matsumoto2019difference}, there are infinitely many graphs $G$ for which $\chrom(G)-\chi(G)=\floor{n/2}-2$; any complete bipartite graph $K_{r,r}$ minus a perfect matching suffices, as does the Tur\'{a}n graph $T(2r,r+1)$.
Because of this, we see that there is no stability around the equality cases of \Cref{thm:comparison-basic}.
There is however hope of some more general classification of near-equality cases, and to this end we pose the following questions.

\begin{question}
    Are there any infinite families of graphs for which $\chrom(G)-\chi(G)=\floor{n/2}-2$, besides the graphs $K_{r,r}$ minus a perfect matching and the Tur\'{a}n graphs $T(2r,r+1)$?
\end{question}

We also ask the following more general question.

\begin{question}
\label{question:minus-O-1}
    For which infinite families of graphs do we have $\chrom(G)-\chi(G)=\floor{n/2}-O(1)$?
\end{question}

One may try to answer \Cref{question:minus-O-1} by strengthening \Cref{thm:comparison-better}; the sharp version of this statement should include some $s$-dependence on the right-hand side.

We also in particular note that we have no families of graphs of odd order approaching these bounds.
We therefore also pose the following question.

\begin{question}
    What is the correct upper bound on $\chrom(G)-\chi(G)$ when we consider only graphs of (large) odd order?
\end{question}

In a similar vein, one could also consider the situation wherein Breaker starts (rather than Maker), and ask for bounds similar to the above for graphs of odd and even order.

Finally, we ask how the values of $\chrom(G)$ and $\blanks(G)$ can differ, which relates to a question of Zhu \cite{zhu1999game} from 1999.
Namely, if Maker wins the vertex colouring game with $k$ colours, must she also win with $k+1$?
(See \cite{hollom2024monotonicity} for some recent work of the author on this problem.)
Such a question can be resolved for the vertex colouring game with blanks by a simple imagination strategy, but such a proof does not work for the usual vertex colouring game.
Note that there are in fact graphs where being able to play a blank does in fact help Breaker, for example if $G$ is the six vertex graph formed by a four-cycle plus a pendant edge and an isolated vertex (for which $\blanks(G)=3$ and $\chrom(G)=2$).
However, it is unclear to the author what the answer to the following question might be.

\begin{question}
    Is there some function $f\from\NN\to\NN$ for which $f(k)> k$ and $\blanks(G)\leq f(\chrom(G))$ for all graphs $G$?
\end{question}

It seems that methods more detailed than those presented in this paper would be required to make progress on these problems.
Nevertheless, the author is hopeful that these questions are within reach of methods not too far beyond those shown here.

%----------------------------------------------------------%

\section{Declaration of Competing Interests}
The author declares that they have no known competing financial interests or personal relationships that could have appeared to influence the work reported in this paper.

%----------------------------------------------------------%

\section{Acknowledgements}
\label{sec:acknowledgement}

The author is supported by the Trinity Internal Graduate Studentship of Trinity College, Cambridge.
The author would like to thank both their supervisor, Professor B\'{e}la Bollob\'{a}s, and the anonymous referees, for their thorough readings of the manuscript and many valuable comments.

%----------------------------------------------------------%

\bibliographystyle{abbrvnat}  
\renewcommand{\bibname}{Bibliography}
\bibliography{main}

%----------------------------------------------------------%

\appendix
\section{Proof of Lemma \ref{lem:base-case}}
\label{sec:appendix}
\markboth{APPENDIX}{APPENDIX}

We now prove \Cref{lem:base-case} parts \ref{case:matsumoto-6}, \ref{case:matsumoto-7}, \ref{case:matsumoto-2}, and \ref{case:matsumoto-3}.
As these cases concern only graphs of order seven or less, they can be checked by exhaustive case analysis, but we provide proofs here for completeness.

Throughout our proofs, we partition of $G$ into independent sets, and consider the sequence of sizes of these sets, which we call classes.
We will label these classes as $C_1,C_2,\dotsc,C_p$ with $\abs{C_1}\geq\abs{C_2}\geq\dots\geq\abs{C_p}$, and define the \emph{class-size sequence} $\cC = \cC(G) = (\abs{C_1},\abs{C_2},\dotsc,\abs{C_p})$.

Throughout our proofs, we will often state that Maker's strategy is to \emph{copy Breaker}.
By this we mean that Maker should play in the same set $C_i$ as Breaker, and use the same colour. 
In the case that Breaker played a blank, Maker instead repeats any colour already used in $C_i$ if possible, or plays a new, previously-unused colour if not.

In both cases, we need to show that Maker wins with $p+1$ colours.
\Cref{lem:greedy} will be used many times; it tells us that if $G$ has at most $\ceil{\frac{p+1}{2}}$ vertices of degree at least $p+1$, then Maker wins.

\begin{proof}[Proof of \Cref{lem:base-case} part \ref{case:matsumoto-6}]
    Graphs with class-size sequence $(2,2,2)$ have already been dealt with in \Cref{lem:base-case} \ref{case:matsumoto-interesting}, so do not need to be considered here. 
    We consider all possible class-size sequences in turn, and show that $\blanks(G)\leq p+1$.
    
    \begin{itemize}
        \item $\cC=(5,1)$ or $\cC=(4,2)$. There are at most 2 vertices of degree at least 3, so \Cref{lem:greedy} tells us that Maker wins.
        
        \item $\cC=(3,3)$. If Maker does not win by \Cref{lem:greedy}, then some vertex $x$ must have degree 3. Breaker only wins if one class contains all three colours.
        Say $x\in C_1$.
        Maker plays colour 1 at $x$.
        Then however Breaker plays, Maker may copy Breaker.
        If this was in $C_1$, then we are done. If it was in $C_2$, then however Breaker next plays, Maker can play colour 1 again in $C_1$, so neither class can have all colours, and Maker wins.

        \item $\cC=(4,1,1)$. There are at most 2 vertices of degree at least 4, so Maker wins by \Cref{lem:greedy}.

        \item $\cC=(3,2,1)$. If Maker does not win by \Cref{lem:greedy}, then all vertices in classes $C_2$ and $C_3$ must have degree at least 4.
        Thus some $x\in C_1$ has degree 3.
        Maker plays 1 at $x$, and however Breaker replies, Maker may play colour 1 again in $C_1$.
        Then Maker may use her third move to colour the single vertex in $C_3$ if it is not already coloured, and win.

        \item $\cC=(3,1,1,1)$ or $\cC=(2,2,1,1)$ or $\cC=(2,1,1,1,1)$ or $\cC=(1,1,1,1,1,1)$. Maker wins by \Cref{lem:greedy}.
    \end{itemize}
    This completes the proof.
\end{proof}

\begin{proof}[Proof of \Cref{lem:base-case} part \ref{case:matsumoto-7}]
    Similarly to the previous case, we need to show that $\blanks(G)\leq p+1$.
    \begin{itemize}
        \item $\cC\in\set{(6,1),(5,2),(5,1,1),(4,1,1,1),(2,2,1,1,1),(2,1,1,1,1,1),\\(1,1,1,1,1,1,1)}$. \Cref{lem:greedy} tells us that Maker wins.
        \item $\cC=(4,3)$. 
        We will split into several further cases.
        First observe that if Maker can force $C_2$ to be filled with only two colours (i.e. some colour is repeated in $C_2$), then the third colour will remain playable in $C_1$, and so Maker will win.
        Note further that if all vertices in $C_1$ have degree at most 2, then so too does some vertex of $C_2$, and thus we are done by \Cref{lem:greedy}.
        We may thus assume that there is a vertex in $C_1$ of degree 3.

        If there is a $u\in C_2$ such that there is no $v\in C_1$ with $\Gamma(v) = C_2\setminus \set{u}$ (for example, a vertex of degree 4), then Maker plays colour 1 at $u$.
        However Breaker replies, Maker can play colour 1 again somewhere in $C_2$, and then on her next turn ensure that $C_2$ is fully coloured.
        Thus Maker again wins.

        Otherwise, assume that for every $u\in C_2$, there is a $v\in C_1$ with $\Gamma(v)=C_2\setminus\set{u}$.
        We may also assume by our previous observations that there is a vertex in $C_1$ of degree exactly 3.
        There is a unique graph satisfying this condition: $C_1$ consists of a vertex $w$ of degree 3 and three vertices $x,y,z$ of degree 2 with pairwise distinct neighbourhoods.
        In this case, Maker plays colour 1 at $w$; then colour 1 can always be played in $C_1$.
        If Breaker responds in $C_2$, w.l.o.g. playing colour 2, then Maker also plays colour 2 in $C_2$.
        Then colour 2 can always be played in $C_2$, so Maker wins.
        If Breaker instead responds in $C_1$, say at $x$ with colour 1 or 2, then Maker can play colour 3 at the vertex in the common neighbourhood of $y$ and $z$.
        Then colour 3 can always be played in $C_2$, so Maker will win.
        \item $\cC=(4,2,1)$. First note that any vertex in $C_1$ has degree at most 3, so can always be coloured when four colours are available.
        Maker first gives colour 1 to the unique vertex in $C_3$.
        On her second turn, she colours a vertex $u\in C_2$ arbitrarily.
        On her third turn, either $C_2$ is completely coloured, and we are done, or some vertex $v\in C_2$ is not coloured. 
        But $v$ is not adjacent to $u$, so at most three neighbours of $v$ are coloured, so $v$ can be coloured, and so all vertices in $C_1$ remain colourable.
        \item $\cC=(3,3,1)$. If Maker does not win by \Cref{lem:greedy}, then there must be a vertex $x$ of degree 4 in $C_1\union C_2$. Say it is in $C_1$.
        Then Maker gives $x$ colour 1, gives another vertex in $C_1$ colour 1 on her second turn, and then colours the one vertex in $C_3$ on her third turn if it is not already coloured.
        Then every other vertex remains colourable, and Maker wins.
        \item $\cC=(3,2,2)$. Maker plays colour 1 at an arbitrary vertex $u\in C_1$.
        Then for her next three moves, Maker plays in the same class as Breaker to fill it, either copying his colour if she can, or playing a new colour if she cannot.
        The only reason why Maker would be unable to copy Breaker's colour is if he played a colour that had already been played elsewhere.
        Each such pair of moves thus uses at most 1 new colour.
        Therefore Maker may win with 4 colours, as required.
        \item $\cC=(3,2,1,1)$. If Maker does not win by \Cref{lem:greedy}, then both vertices in $C_2$ must have edges to all of $C_1\union C_3\union C_4$.
        Maker begins by playing colour 1 at $u\in C_4$.
        If Breaker replies at $C_3$, then we are left in $K_{2,3}$ with at least three unplayed colours, and Maker wins.
        If Breaker replies with colour 1 at some vertex $v$, then apply \Cref{lem:subgraph-imagination}, and let $G'=G-\set{u,v}$.
        Note that as $u$ and $v$ are not adjacent and $v\notin C_3$, we have $v\in C_1$. 
        Thus $G'$ has at most one vertex of degree at least 4, and so by \Cref{lem:greedy}, Maker wins on $G'$ even without considering classes marked for blanks (which would only help her), as required.

        So we may assume that Breaker plays colour 2 in either $C_1$ or $C_2$. In either case, Maker copies Breaker.
        Then the only vertex which might become impossible to colour is the unique vertex in $v\in C_3$.
        Thus if Breaker colours $v$, we are done. If not, Maker plays at $v$, and we are done anyway.
        \item $\cC=(3,1,1,1,1)$. \Cref{lem:greedy} implies that the graph must be complete 5-partite.
        But then Maker may play colour 1 in class $C_1$ for both of her first two moves, and win.
    \end{itemize}
    This completes the proof.
\end{proof}

\begin{proof}[Proof of \Cref{lem:base-case} part \ref{case:matsumoto-2}]
    The only class-size sequences are $(2)$ and $(1,1)$.
    \begin{itemize}
        \item $\cC=(2)$, so $p=1$.
        If zero colours are available, then both players are forced to play blanks, which is allowed as $C_1$, the unique class, is marked for blanks.
        Thus we have $\blanks(G;V(G))=0=p-1$, as required.
        \item $\cC=(1,1)$, so $p=2$.
        Let $V(G)=\set{x,y}$.
        Then $\blanks(G,\set{x},\set{y})\leq\blanks(G,\set{x})=1=p-1$, as required; Maker plays a blank at $x$, and then Breaker is forced to play either colour 1 or a blank at $y$.
    \end{itemize}
    This completes the proof.
\end{proof}

\begin{proof}[Proof of \Cref{lem:base-case} part \ref{case:matsumoto-3}]
    There are three possible class-size sequences for us to consider: $(3)$, $(2,1)$, and $(1,1,1)$.
    \begin{itemize}
        \item $\cC=(3)$, so $p=1$.
        If zero colours are available, then both players are forced to play blanks, and so we have $\blanks(G;V(G))=0=p-1$, as required.
        \item $\cC=(2,1)$, so $p=2$.
        If $C_2$ is marked for blanks, then Maker plays a blank in it. Then colour 1 can be played at both remaining vertices.
        If the set of size 2 is marked for blanks, then Maker plays colour 1 in the set of size 1. Then both players are forced to play blanks in the set of size 2, so Maker wins.
        \item $\cC=(1,1,1)$, so $p=3$. 
        We need to show that $\blanks(G;D_1,\dotsc,D_s)\leq 2$ when $s\geq 1$.
        But if two colours are available, then Maker can play a blank on her first move, and then must win.
    \end{itemize}
    This completes the proof.
\end{proof}

\begin{proof}[Proof of \Cref{lem:base-case} part \ref{case:matsumoto-base}]
    Note that if Breaker never plays a blank, then the result follows from \Cref{thm:comparison-basic}, so we may assume that Breaker does in fact play a blank, and so at most $n-1$ colours are used.
    Noting further that the $\chi(G) = 1$ case is trivial, and that $n - 1\leq \floor{n/2} + 1$ for $n\in\set{2,3,4}$, only the case of $n=5$ remains.
    Moreover, if $n=5$ and $\chi(G)\geq 3$, then it would suffice to prove that $\blanks(G)\leq \floor{n/2} + 2$, which is again immediate.
    Thus it only remains to consider the case of $n=5$ and $\chi(G) = 2$, and prove that $\blanks(G)\leq 3$ here.

    Indeed, as Breaker must play a blank at some point, it suffices for Maker to force any colour to be repeated at any point.
    As $\chi(G) = 2$, there is some independent set $C_1\sseq G$ of size at least 3.
    Maker plays colour 1 in $C_1$.
    Then either Breaker plays colour 1, and we are immediately done, or Breaker plays a different colour or a blank.
    In the latter case, Maker can play colour 1 again in $C_1$, and so in every case a colour is repeated.
    This completes the proof.
\end{proof}

%----------------------------------------------------------%

\end{document}